\pdfoutput=1
\documentclass[12pt,reqno,a4paper]{amsart}

\usepackage{iftex}
\ifPDFTeX
  \usepackage[utf8]{inputenc}
  \usepackage[T1]{fontenc}
\fi
\usepackage[english]{babel}
\usepackage[a4paper,margin=1.12in]{geometry}

\usepackage{microtype}
\usepackage{mathpazo}
\usepackage{euler}

\usepackage{amsmath,amssymb,amsthm,mathtools,mathrsfs,bm,eucal,tensor}
\usepackage{stmaryrd}
\usepackage{dsfont}
\usepackage{aliascnt}
\numberwithin{equation}{section}

\usepackage{graphicx}
\usepackage[dvipsnames,x11names]{xcolor}
\usepackage{tabularx}
\usepackage{booktabs}
\usepackage{makecell}
\usepackage{siunitx}

\usepackage[all,cmtip]{xy}
\usepackage{tikz}
\usetikzlibrary{
  arrows.meta,
  calc,
  positioning,
  fit,
  backgrounds,
  shapes.geometric,
  decorations.markings,
  matrix,
  shapes,
  decorations.pathmorphing
}
\usepackage{tikz-cd}

\tikzset{
  >=Stealth,
  box/.style={draw, rounded corners, inner sep=6pt},
  v/.style={draw, circle, inner sep=1.4pt},
  arr/.style={->, thick},
  darr/.style={->, thick, dashed},
  midarrow/.style={
    postaction={decorate},
    decoration={markings,mark=at position 0.58 with {\arrow{Stealth}}}
  },
  vertex/.style={circle,draw,inner sep=2pt},
  active/.style={thick},
  inactive/.style={dashed}
}
\tikzcdset{arrow style=tikz, diagrams={>=Stealth}}

\usepackage{enumitem}
\usepackage{comment,braket,xspace,csquotes}
\usepackage{indentfirst}
\usepackage{relsize}
\usepackage[tablename=Table]{caption}
\usepackage[active]{srcltx}

\theoremstyle{plain}
\newtheorem{theorem}{Theorem}[section]

\newaliascnt{lemma}{theorem}
\newtheorem{lemma}[lemma]{Lemma}
\aliascntresetthe{lemma}

\newaliascnt{proposition}{theorem}
\newtheorem{proposition}[proposition]{Proposition}
\aliascntresetthe{proposition}

\newaliascnt{corollary}{theorem}
\newtheorem{corollary}[corollary]{Corollary}
\aliascntresetthe{corollary}

\theoremstyle{definition}

\newaliascnt{definition}{theorem}
\newtheorem{definition}[definition]{Definition}
\aliascntresetthe{definition}

\newaliascnt{notation}{theorem}

\aliascntresetthe{notation}

\newaliascnt{remark}{theorem}
\newtheorem{remark}[remark]{Remark}
\aliascntresetthe{remark}

\newaliascnt{assumption}{theorem}

\aliascntresetthe{assumption}

\newaliascnt{example}{theorem}
\newtheorem{example}[example]{Example}
\aliascntresetthe{example}

\newaliascnt{conjecture}{theorem}

\aliascntresetthe{conjecture}

\usepackage{hyperref}
\hypersetup{
  colorlinks=true,
  linkcolor=blue!60!black,
  citecolor=blue!60!black,
  urlcolor=blue!60!black
}
\usepackage[capitalize,nameinlink]{cleveref}

\Crefname{theorem}{Theorem}{Theorems}
\Crefname{lemma}{Lemma}{Lemmas}
\Crefname{proposition}{Proposition}{Propositions}
\Crefname{corollary}{Corollary}{Corollaries}
\Crefname{definition}{Definition}{Definitions}
\Crefname{notation}{Notation}{Notations}
\Crefname{remark}{Remark}{Remarks}
\Crefname{assumption}{Assumption}{Assumptions}
\Crefname{example}{Example}{Examples}
\Crefname{conjecture}{Conjecture}{Conjectures}

\DeclareMathOperator{\Sing}{Sing}

\DeclareMathOperator{\rk}{rk}

\DeclareMathOperator{\Res}{Res}

\DeclareMathOperator{\Var}{Var}

\newcommand{\cO}{\mathcal{O}}
\newcommand{\cF}{\mathcal{F}}

\newcommand{\Der}{\operatorname{Der}}

\newcommand{\ba}{\begin{array}}
\newcommand{\ea}{\end{array}}

\date{}

\begin{document}

\title{Excess logarithmic residues for foliations by curves and applications}

\author[A. Cavalcante]{Alana Cavalcante}
\address[A. Cavalcante]{Departamento de Matem\'atica, Universidade Federal de Vi\c cosa, Av. P. H. Rolfs, s/n, Vi\c cosa, Brazil}
\email{alana@ufv.br}

\author[M. Corr\^ea]{Maur\'icio Corr\^ea}
\address[M. Corr\^ea]{Dipartimento di Matematica, Universit\`a degli Studi di Bari, Via E. Orabona 4, I-70125 Bari, Italy}
\email{mauricio.correa.mat@gmail.com, mauricio.barros@uniba.it}

\author[F. Louren\c co]{Fernando Louren\c co}
\address[F. Louren\c co]{DMM, Universidade Federal de Lavras, Campus Universit\'ario, Lavras, Brazil}
\email{fernando.lourenco@ufla.br}

\author[E. Shahsavaripour]{Elaheh Shahsavaripour}
\address[E. Shahsavaripour]{Universit\`a degli Studi di Bari, Via E. Orabona 4, I-70125 Bari, Italy}
\email{e.shahsavaripour@gmail.com}

\subjclass[2020]{Primary: 37F75, 58A17, 14D20, 14J60. Secondary: 53D17, 14F06}

\keywords{Holomorphic foliations, logarithmic residues, Baum--Bott residues, Poincar\'e problem}

 \begin{abstract}
 We introduce excess logarithmic residues for one-dimensional holomorphic
foliations tangent to a divisor. They arise from the comparison between the
logarithmic normal sheaf and the ordinary normal sheaf of the foliation, and
measure the local variation between the logarithmic and classical Baum--Bott
contributions. We prove a global residue formula expressing the corresponding
Chern numbers as sums of local residues. We then derive a Poincar\'e-type bound
for invariant hypersurfaces from the non-negativity of the relevant logarithmic
residues. Finally, for a normal \(\mathbb Q\)-Gorenstein surface \(Y\), we show
that the componentwise logarithmic residues of a lifted foliation along the
exceptional divisor of a functorial resolution recover the log discrepancies of
the singularities of \(Y\), giving a dynamical and foliated test for log
canonicity of these singularities.
\end{abstract}

\maketitle

\section{Introduction}

Let \(\cF\) be a holomorphic foliation of codimension \(q\) on a complex
manifold \(X\), and let \(U=X\setminus \Sing(\cF)\) be its regular locus. On
\(U\), the normal bundle \(N_{\cF}|_U\) carries the Bott partial connection in
the directions tangent to the leaves. Bott's vanishing theorem implies that
every characteristic form associated with a homogeneous polynomial of degree
greater than \(q\) vanishes on \(U\). Nevertheless, the corresponding
characteristic class of the normal sheaf may be non-zero on \(X\). Baum and
Bott proved that such classes are represented by residues supported on the
singular set of the foliation; see \cite{BB2,BB1}. In the case of foliations by
curves with isolated singularities, these local contributions are given by
explicit Grothendieck residues; see also \cite{Suwa}.

This paper develops an excess logarithmic residue theory for foliations by
curves tangent to a divisor. Let \(D\subset X\) be a reduced divisor. Following
Saito, the logarithmic tangent sheaf is
\[
T_X(-\log D)=\Der_X(\log D),
\]
the sheaf of holomorphic vector fields preserving the ideal of \(D\); see
\cite{Saito}. A one-dimensional foliation \(\cF\) is logarithmic along \(D\)
when
\[
T_{\cF}\subset T_X(-\log D).
\]
Thus the local vector fields defining \(\cF\) are tangent to \(D\). We assume
throughout that \(T_{\cF}\) is saturated in \(T_X(-\log D)\), so that the
logarithmic normal sheaf below is torsion-free. The same foliation then has two
natural transverse sheaves: the ordinary normal sheaf
\[
N_{\cF}:=T_X/T_{\cF},
\]
and the logarithmic normal sheaf
\[
N_{\cF/X}^{\log}:=T_X(-\log D)/T_{\cF}.
\]
The inclusion \(T_X(-\log D)\subset T_X\) induces a canonical morphism
\[
N_{\cF/X}^{\log}\longrightarrow N_{\cF}.
\]
The residues considered here are attached to this morphism. They measure the
local change in the Baum--Bott contribution when the ordinary transverse
geometry of \(X\) is replaced by the logarithmic transverse geometry determined
by \(D\). In this sense, the word \textit{excess} refers to the local defect produced
by the comparison
\[
N_{\cF/X}^{\log}\longrightarrow N_{\cF}.
\]

Let us spell out this comparison locally. If \(D\) is defined near a point by a
reduced equation \(f=0\), then a vector field \(v\) is logarithmic along \(D\)
precisely when \(v(f)\in(f)\), or equivalently when \(v(f)/f\) is holomorphic.
Hence the quotient \(T_X/T_X(-\log D)\) is supported on \(D\) and is governed by
the Jacobian data of the divisor. Thus replacing \(T_X\) by \(T_X(-\log D)\)
does not merely impose tangency to \(D\); it changes the transverse sheaf in
which the foliation is measured.

The main local invariant introduced in this paper is the excess logarithmic
residue
\[
\widehat{\operatorname{Res}}^{\log}_{c_1^{n-i},\,D^i}(\cF,D,p),
\]
defined by localising the Chern--Weil representative of the class
\[
c_1\bigl(N_{\cF/X}^{\log}\bigr)^{n-i}D^i.
\]
Here, for simplicity, we write \(D\) also for the cohomology class
\([D]=c_1(\cO_X(D))\). In coordinates adapted to the divisor at a point of
\(D_{\mathrm{reg}}\), this residue admits a Grothendieck-residue expression
relative to \(D\), in the sense of Brasselet--Seade--Suwa
\cite[\S~6.3]{BrSeSu}. More precisely, if \(D=\{f=0\}\) and
\[
v=
\sum_{j=1}^{n-1}a_j\frac{\partial}{\partial z_j}
+
kf\frac{\partial}{\partial f},
\qquad k=\frac{v(f)}{f},
\]
then, writing \(\bar a_j=a_j|_D\) and denoting by \(J_D(v)\) the Jacobian
matrix of the vector field induced by \(v\) on \(D\), one obtains, for
\(1\leq i\leq n-1\),
\[
\widehat{\operatorname{Res}}^{\log}_{c_1^{n-i},\,D^i}(\cF,D,p)
=
\operatorname{Res}_{p}
\left[
\frac{
\bigl(\operatorname{tr}J_D(v)\bigr)^{n-i}
\left(\left. v(f)/f\right|_D\right)^{i-1}
\,dz_1\wedge\cdots\wedge dz_{n-1}
}
{\bar a_1,\ldots,\bar a_{n-1}}
\right]_{D}.
\]

For comparison, we also consider the ordinary local residue
\[
\operatorname{Res}_{c_1^{n-i},\,D^i}(\cF,D,p),
\]
obtained by localising the corresponding class built from the ordinary normal
sheaf \(N_{\cF}\). The excess between the ordinary and logarithmic transverse
geometries is measured by the variational residue
\[
\operatorname{Var}_{c_1^{n-i},\,D^i}(\cF,D,p).
\]
It is defined by an explicit transgression form, and we prove that
\[
\operatorname{Var}_{c_1^{n-i},\,D^i}(\cF,D,p)
=
\operatorname{Res}_{c_1^{n-i},\,D^i}(\cF,D,p)
-
\widehat{\operatorname{Res}}^{\log}_{c_1^{n-i},\,D^i}(\cF,D,p).
\]
Thus the variational residue is the excess term measuring the passage from the
logarithmic normal sheaf to the ordinary normal sheaf. Precise definitions and
the local Grothendieck-residue formulae are given in
Section~\ref{sec:residues}.

\begin{theorem}[=Theorem~\ref{thm:residues}]
Let \(\cF\) be a one-dimensional holomorphic foliation on \(X\) with only
isolated singularities, logarithmic along \(D\). Let \(Z_D(\cF)\) denote the
finite set of isolated zeroes of the vector field induced by \(\cF\) on
\(D_{\mathrm{reg}}\). When these zeroes coincide with
\(\Sing(\cF)\cap D_{\mathrm{reg}}\), we write the sums simply over
\(\Sing(\cF)\cap D\).
For \(i=0\), one has the logarithmic formula
\[
\sum_{p\in \Sing(\cF)}
\widehat{\operatorname{Res}}^{\log}_{c_1^n}(\cF,D,p)
=
\int_X c_1\bigl(N^{\log}_{\cF/X}\bigr)^n,
\]
and the variational identity
\[
\sum_{p\in \Sing(\cF)}
\operatorname{Var}_{c_1^n}(\cF,D,p)
=
\int_X
\Bigl[
c_1(N_{\cF})^n
-
c_1\bigl(N^{\log}_{\cF/X}\bigr)^n
\Bigr].
\]
For every \(1\leq i\leq n-1\), one has
\[
\sum_{p\in Z_D(\cF)}
\widehat{\operatorname{Res}}^{\log}_{c_1^{n-i},\,D^i}
(\cF,D,p)
=
\int_X c_1\bigl(N^{\log}_{\cF/X}\bigr)^{n-i}D^i,
\]
and
\[
\sum_{p\in Z_D(\cF)}
\operatorname{Var}_{c_1^{n-i},\,D^i}
(\cF,D,p)
=
\int_X
\Bigl[
c_1(N_{\cF})^{n-i}
-
c_1\bigl(N^{\log}_{\cF/X}\bigr)^{n-i}
\Bigr]D^i.
\]
\end{theorem}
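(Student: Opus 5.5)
The plan is to follow the Baum--Bott localisation scheme in Chern--Weil form, as in \cite{Suwa} and \cite{BrSeSu}. I would first establish a logarithmic Bott vanishing statement. On \(U=X\setminus\Sing(\cF)\), the sheaf \(N^{\log}_{\cF/X}\) is locally free, since \(T_{\cF}\) is saturated in \(T_X(-\log D)\). Moreover, \(T_X(-\log D)\) is locally free away from \(\Sing(D)\), and in any case on a neighbourhood of \(D_{\mathrm{reg}}\). On \(U\), the Bott partial connection \(\nabla_v\bar w=\overline{[v,w]}\) preserves \(T_X(-\log D)\), because the bracket of two logarithmic fields is logarithmic. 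It therefore descends to a partial connection on \(N^{\log}_{\cF/X}\). Any connection on \(N^{\log}_{\cF/X}|_U\) extending it has curvature whose \(c_1\)-form lies in the ideal generated by \(\omega\), the local \(1\)-form dual to \(v\). Consequently \(c_1(\nabla)^{n}=0\) on \(U\), since \(\operatorname{rk} N^{\log}=n-1<n\).

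For the terms containing \(D^i\), I would represent \([D]\) by the Chern form of a connection \(\nabla^D\) on \(\cO_X(D)\). Near \(D\) this connection is chosen to be adapted to the foliation, with connection form \(v(f)/f\) along the leaf direction, i.e. \(\theta=\frac{df}{f}\)-type logarithmic data restricted to \(\cF\). With this choice \(c_1(\nabla^D)\) is also Bott-flat in the leaf direction. The mixed form \(c_1(\nabla)^{n-i}c_1(\nabla^D)^i\) then vanishes on \(U\). This uses that \(c_1(\nabla)\) and \(c_1(\nabla^D)\) both lie in a common ideal generated by forms annihilated by \(v\), so their product of total degree \(n\) vanishes.

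Next comes the localisation. Choose small balls \(B_p\) around each \(p\in\Sing(\cF)\) and take the standard cover \(\{U, \bigcup B_p\}\) in Čech--de Rham cohomology. The global class \(c_1(N^{\log})^{n-i}D^i\) is then represented by \(\sum_p\) of the localised class on \(B_p\), namely
\[
\int_{B_p}\sigma_0-\int_{\partial B_p}\sigma_{01},
\]
where \(\sigma_{01}\) is the Bott--Chern transgression between an arbitrary connection on \(B_p\) and the Bott-type connection on \(B_p\setminus\{p\}\). Summing over \(p\) gives the first and third formulas, and by definition the local terms are \(\widehat{\Res}^{\log}\). For \(i\ge1\), I would push the support onto \(D\) by representing \([D]\) via a form supported in a tubular neighbourhood, or equivalently by using \(c_1(\nabla^D)\), which is exact off \(D\). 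Only points of \(D\) contribute then, and after residue restriction to \(D\) (\cite[\S6.3]{BrSeSu}) these are the zeroes of \(v|_D\), i.e. the points of \(Z_D(\cF)\). The points of \(\Sing(\cF)\cap D\) that are not isolated zeroes of the induced field are excluded by the hypothesis as stated.

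The variational identities then follow formally. The ordinary Baum--Bott theorem gives \(\sum_p \Res_{c_1^{n-i},D^i}(\cF,D,p)=\int_X c_1(N_\cF)^{n-i}D^i\). Subtracting the logarithmic formula and applying the identity \(\Var=\Res-\widehat{\Res}^{\log}\) yields the result.

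The main obstacle I expect is at the points of \(\Sing(D)\), where \(T_X(-\log D)\) need not be locally free and \(N^{\log}\) is only torsion-free. There one must check that \(c_1(N^{\log})\) still makes sense and that the localisation does not pick up spurious contributions along \(\Sing(D)\setminus\Sing(\cF)\). I would handle this by comparing with \(N_\cF\). The morphism \(N^{\log}\to N_\cF\) has cokernel \(T_X/T_X(-\log D)\), supported on \(D\), so \(c_1(N^{\log})=c_1(N_\cF)-[\text{divisorial part}]\). This determinant relation can be verified in codimension one, where it holds on \(D_{\mathrm{reg}}\) and there both sheaves are bundles off \(\Sing(\cF)\). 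The Bott connection on \(N_\cF\), twisted by \(\nabla^D\), then gives a Bott-flat connection on \(\det N^{\log}\) away from \(\Sing(\cF)\), which suffices because only \(c_1\) enters.
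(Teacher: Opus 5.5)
Your argument is correct in substance. For $i=0$ and for the variational identities (ordinary formula minus logarithmic formula) it is the paper's argument. For $1\le i\le n-1$, however, you localise by a different mechanism.

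The paper passes to $D_{\mathrm{reg}}$. There the induced field $v|_D$ defines a foliation of codimension $n-2$ on $D$, and the relative polynomial has degree $n-1$. Bott vanishing on $D$ then localises directly at $Z_D(\cF)$ and produces the relative residue that Proposition~\ref{prop:hat-log-groth} evaluates.

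You instead stay in $X$:
\begin{itemize}
\item Because $v(f)\in(f)$, $T_\cF$ acts on $\cO_X(D)$. With an adapted $\nabla^D$, the degree-$n$ form $c_1(N^{\log}_{\cF/X})^{n-i}D^i$ vanishes off $\Sing(\cF)$, since the foliation has codimension $n-1$.
\item Points of $\Sing(\cF)\setminus D$ contribute nothing, because there $\nabla^D$ can be taken flat in the frame $s_D$, on which $v$ acts trivially.
\item The surviving points are exactly $Z_D(\cF)$. The normal component $kf\,\partial/\partial f$ of $v$ vanishes on $D$, so $\Sing(\cF)\cap D_{\mathrm{reg}}$ is the zero set of $v|_D$.
\end{itemize}

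Your route gives explicit control of $\Sing(\cF)\setminus D$. Through the $T_\cF$-equivariant isomorphism $\det N^{\log}_{\cF/X}\cong\det N_\cF\otimes\cO_X(-D)$, it also controls $\Sing(D)\setminus\Sing(\cF)$; the paper addresses neither. Your local terms appear as ambient residues
\[
\Res_p\left[\frac{(\operatorname{tr}Jv-k)^{n-i}\,k^{i}\,dz_1\wedge\cdots\wedge dz_{n-1}\wedge df}{a_1,\ldots,a_{n-1},kf}\right].
\]
For $i\ge 1$ these reduce to the paper's relative formula. One applies the transformation law to $(a_1,\ldots,a_{n-1},kf)=(a_1,\ldots,a_{n-1},f)\cdot\operatorname{diag}(1,\ldots,1,k)$ and uses $(\operatorname{tr}Jv-k)|_D=\operatorname{tr}J_D(v)$. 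The paper's route gives the relative residue on $D$ directly, which is the form its later computations use, but it tacitly needs $D$ smooth near the relevant points.

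A few of your statements need repair, though none is fatal.
\begin{itemize}
\item \textbf{Bott vanishing.} It holds because the first Chern form of a connection extending the partial connection, with $(0,1)$-part $\bar\partial$, lies in the ideal generated by the $n-1$ holomorphic $1$-forms annihilating $v$. It is this codimension that kills products of $n$ such forms, not the rank of $N^{\log}_{\cF/X}$ and not a single ``$1$-form dual to $v$''.
\item \textbf{Local freeness.} $N^{\log}_{\cF/X}$ is locally free off $\Sing(\cF)\cup\Sing(D)$ because $v(p)\neq 0$ in $T_X$, hence in $T_X(-\log D)$. Saturation alone only gives torsion-freeness.
\item \textbf{Choice of representative for $[D]$.} A Thom-form representative is not $\cF$-adapted, so it is not ``equivalent'' to $c_1(\nabla^D)$ for the purpose of localisation.
\item \textbf{Identifying the local terms.} Matching your \v{C}ech--de Rham local terms with the link integrals $\int_{L_p}\psi_i^{\log}$ requires flat reference connections on $B_p$ and $F_D=c_1(\nabla^D)$ near $p$. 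You should state these choices.
\item \textbf{Points of $\Sing(\cF)\cap\Sing(D)$.} The stated hypotheses do not exclude them, and your ambient localisation does see them. Summing only over $Z_D(\cF)$ therefore needs the extra assumption that such points are absent; the paper makes the same assumption tacitly.
\end{itemize}
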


We apply this formula to the Poincar\'e problem for foliations by curves on
projective space. Let \(\cF\) be a one-dimensional foliation on
\(\mathbb P^n\), and let \(D\subset\mathbb P^n\) be an invariant hypersurface
of degree \(m\). If \(\deg\cF=d\), the Poincar\'e problem asks for bounds on
\(m\) in terms of \(d\). This question has been studied, in different forms, by
Soares, Carnicer, Campillo--Carnicer--de la Fuente Garc\'ia, Brunella--Mendes
and others; see \cite{BM,CC,Carnicer,CJ,CoMa2,Esteves,So,So0}.

The logarithmic residues introduced here give a   criterion for such a
bound. More precisely, set
\[
i=
\begin{cases}
0, & \text{if } n \text{ is odd},\\
1, & \text{if } n \text{ is even}.
\end{cases}
\]
If the total logarithmic residue is non-negative, namely
\[
\sum_p
\widehat{\operatorname{Res}}^{\log}_{c_1^{n-i},\,D^i}
(\cF,D,p)\geq 0,
\]
then the global residue formula gives
\[
m\leq d+n.
\]
In particular, this hypothesis is satisfied if all the corresponding local
logarithmic residues are non-negative.

The local algebra behind the construction is closely related to Aleksandrov's
work on logarithmic differential forms, torsion differentials and
multidimensional residue theory; see \cite{Alex1,Ale_s}. For vector fields
tangent to hypersurfaces, the logarithmic index compares the usual local index
with the index computed in the module of logarithmic derivations. This is the
local model for the phenomenon studied here: the ambient module in which the
foliation is measured is changed, and the corresponding local residue changes
accordingly.

Global logarithmic residue formulae for foliations were obtained in
\cite{CoMa1,CoMa3}. More recently, logarithmic Bott localisation formulae for
global sections of \(T_X(-\log D)\) with non-isolated local complete
intersection zero schemes were established in \cite{CaEla}. The logarithmic
Baum--Bott theory for foliations by curves was further developed in
\cite{CoLoMa}. The present paper refines this circle of ideas by isolating the
local excess term associated with the comparison
\[
\begin{tikzcd}
N_{\cF/X}^{\log} \arrow[r] & N_{\cF}.
\end{tikzcd}
\]
It is therefore neither simply the classical Baum--Bott residue nor merely its
logarithmic analogue, but the residue measuring the passage from the
logarithmic normal sheaf to the ordinary normal sheaf.

Finally, in Section~\ref{sec:exceptional-residues}, we prove a birational
application in dimension two. Theorem~\ref{thm:exceptional-residues-discrepancies}
shows that, for a normal \(\mathbb Q\)-Gorenstein surface, the exceptional
logarithmic residues of a lifted foliation on a functorial log resolution
recover the log discrepancy vector. If
\[
K_X=\pi^*K_Y+\sum_j a_jD_j
\]
and \(M=(D_j\cdot D_k)\) is the exceptional intersection matrix, then
\[
(a_1+1,\ldots,a_r+1)^t=-M^{-1}I,
\]
where
\[
I=(I_1,\ldots,I_r)^t,
\qquad
I_j=
\sum_{p\in Z_{D_j}(\widetilde{\cF})}
\widehat{\operatorname{Res}}^{\log}_{c_1,D_j}
(\widetilde{\cF},D,p).
\]
Thus, under the isolatedness assumptions of
Theorem~\ref{thm:exceptional-residues-discrepancies}, the residue vector gives a
dynamical and foliated test for log canonicity.

The paper is organised as follows. In Section~\ref{sec:preliminaries} we recall
the basic material on holomorphic foliations, logarithmic vector fields and
logarithmic normal sheaves. In Section~\ref{sec:residues} we introduce the
local logarithmic residue and the associated ordinary and variational residues.
In Section~\ref{sec:global-formula} we prove the global residue formula. In
Section~\ref{sec:poincare} we apply the formula to invariant hypersurfaces of
one-dimensional foliations on projective space and derive the announced
Poincar\'e-type bound. Finally, in Section~\ref{sec:exceptional-residues} we
show that, for singular surfaces, exceptional logarithmic residues on a
functorial log resolution recover the log discrepancy vector.

\section{Preliminaries}\label{sec:preliminaries}

\begin{definition}
Let $X$ be a connected complex manifold of dimension $n$. A holomorphic
distribution of codimension $k$ on $X$ is a saturated coherent subsheaf
\[
T_{\cF}\subset T_X
\]
of rank $n-k$. Thus the quotient
$
N_{\cF}:=T_X/T_{\cF}
$
is torsion-free; it is called the normal sheaf of the distribution.
The distribution is integrable, and hence defines a holomorphic foliation, if
$T_{\cF}$ is closed under the Lie bracket: for every open set $U\subset X$ and
all sections $\xi,\eta\in H^0(U,T_{\cF})$, one has
$
[\xi,\eta]\in H^0(U,T_{\cF}).
$
In this case one has the exact sequence
\[
0\longrightarrow T_{\cF}
\longrightarrow T_X
\longrightarrow N_{\cF}
\longrightarrow 0,
\]
and the singular locus of $\cF$ is
\[
\Sing(\cF):=\Sing(N_{\cF})
=
\{p\in X;\; N_{\cF,p}\ \text{is not locally free}\}.
\]
\end{definition}

The inclusion of the conormal sheaf $
N_{\cF}^{\vee}\hookrightarrow \Omega_X^1
$
determines a twisted holomorphic $k$-form
$$
\omega_{\cF}\in H^0\bigl(X,\Omega_X^k\otimes \det(N_{\cF})\bigr).
$$
Locally one may write
\[
\omega_{\cF}=\omega_1\wedge\cdots\wedge\omega_k,
\]
and the integrability condition is the Frobenius condition
\[
d\omega_i\wedge \omega_1\wedge\cdots\wedge\omega_k=0,
\qquad i=1,\ldots,k.
\]

When $k=n-1$, the foliation has dimension one, or equivalently is a foliation
by curves. In this case $T_{\cF}$ has rank one, and $\cF$ may be represented by
a twisted holomorphic vector field
\[
v\in H^0(X,T_X\otimes \mathcal L),
\]
for a suitable line bundle $\mathcal L$. On $X=\mathbb P^n$, a
one-dimensional foliation of degree $d$ is represented by such a section with
$
\mathcal L=\mathcal O_{\mathbb P^n}(d-1),
$
with $d\geq 0$. 
\begin{definition}
Let $D$ be a divisor on $X$, locally given by $D=\{f=0\}$. A holomorphic vector
field $v$ is logarithmic along $D$, or tangent to $D$, if it preserves the ideal
of $D$; equivalently,
$
v(f)=hf
$
for some holomorphic function $h$. The sheaf of logarithmic vector fields along
$D$ is denoted by
\[
T_X(-\log D)=\Der_X(\log D).
\]
\end{definition}
Thus $T_X(-\log D)$ consists   of those holomorphic vector fields whose
local flow preserves $D$. This sheaf is fundamental in the theory of
logarithmic forms and logarithmic derivations; see, for instance,
\cite{Saito,Ale_s,Alex1,Alu11}.
We shall consider one-dimensional holomorphic foliations which are logarithmic
along $D$, namely foliations satisfying
$
T_{\cF}\subset T_X(-\log D).
$
This inclusion yields the logarithmic tangent exact sequence
\[
0\longrightarrow T_{\cF}
\longrightarrow T_X(-\log D)
\longrightarrow N_{\cF/X}^{\log}
\longrightarrow 0,
\]
where, throughout the paper, we assume that \(T_{\cF}\) is saturated in
\(T_X(-\log D)\) and set
\[
N_{\cF/X}^{\log}:=T_X(-\log D)/T_{\cF}.
\]
Thus \(N_{\cF/X}^{\log}\) is torsion-free; when it is locally free, it is the
logarithmic normal bundle of the foliation. Its determinant is the line bundle
\[
\det N_{\cF/X}^{\log}:=
\bigl(\wedge^{n-1}N_{\cF/X}^{\log}\bigr)^{\ast\ast},
\]
and we set
\[
c_1(N_{\cF/X}^{\log}):=c_1(\det N_{\cF/X}^{\log}).
\]

The sheaf $N_{\cF/X}^{\log}$ describes the transverse geometry of $\cF$ after
the divisor has been incorporated into the tangent structure of $X$. It has
appeared in recent work on logarithmic foliations and logarithmic residue
formulae; see, for example, \cite{CoLoMa,CoMa1,CoMa3,CaEla}.

\subsection{Chern--Weil theory of characteristic classes}

Although \(N_{\cF}\) and \(N_{\cF/X}^{\log}\) need not be locally free everywhere,
only their first Chern classes and powers of these classes are used below. For a
torsion-free coherent sheaf \(\mathcal E\) on the smooth manifold \(X\), we set
\[
c_1(\mathcal E):=c_1(\det\mathcal E),
\qquad
\det\mathcal E=
\bigl(\wedge^{\rk\mathcal E}\mathcal E\bigr)^{\ast\ast}.
\]
Thus \(c_1(\mathcal E)\) is the first Chern class of a genuine line bundle, and
all powers \(c_1(\mathcal E)^k\) used below are understood in this sense.

\begin{definition} A connection for a complex vector bundle $E$ on $M$ is a $\mathbb{C}$-linear map
$$ \nabla : A^{0} (X,E) \longrightarrow A^{1} (X, E) $$
that satisfies
$$ \nabla(fs)=df\otimes s+f\nabla(s) \ \ \mbox{for} \ \ f \in A^{0}(X) \ \ \mbox{and} \ \ s \in A^{0}(X,E).$$

\end{definition}
If $\nabla$ is a connection for $E$, then it induces a $\mathbb{C}$-linear map
$$ \nabla := \nabla^{2} : A^{1}(X,E) \longrightarrow A^{2}(X,E) $$

\noindent satisfying
$$ \nabla (\omega \otimes s) = d\omega \otimes s - \omega \wedge \nabla (s), \ \ \omega \in A^{1}(X), \ \ s \in A^{0} (X,E).$$

\begin{definition} The composition $K := \nabla \circ \nabla : A^{0} (X,E) \longrightarrow A^{2} (X,E)$ is called the curvature of the connection $\nabla$.

\end{definition}

If $\nabla$ denotes a connection for a vector bundle $E$ of rank $r$ and $E$ is trivial on the open set $U$, i.e., $E|_{U} \simeq U \times \mathbb{C}^{r}$ and if $ s = ( s_{1},\ldots,s_{r})$ is a frame of $E$ on $U$, then we can write
$$ \nabla (s_{i}) = \sum_{j = 1}^{r} \theta_{ij} \otimes s_{j} \ \ ; \ \ \theta_{ij} \in A^{1}(U). $$
The connection matrix with respect to $s$ is $\theta = (\theta_{ij})$ . Also, using the curvature definition, we get
$$ K(s_{i}) = \sum_{j = 1}^{r}K_{ij}s_{j}, \ \ \ \mbox{where} \ \ \ K_{ij} = d \theta_{ij} - \sum_{k =1}^{r} \theta_{ik} \wedge \theta_{kj}.$$
The curvature matrix with respect to the frame $s$ is $K = (K_{ij})$. Now, to define the Chern class of a vector bundle $E$, we consider $\sigma_{i}, i = 1,\ldots,r$ the $i$-th elementary symmetric functions in the eigenvalues of the matrix $K$
$$ \det(It + K) = 1 + \sigma_{1}(K)t + \sigma_{2}(K)t^{2} + \cdots + \sigma_{r}(K)t^{r}. $$
Next, we define a $2i$-form of Chern $c_{i}$ on $U$ by
$$ c_{i}(K) := \sigma_{i} \Big(\frac{{   i}}{2 \pi}K\Big). $$

In general, if $\varphi$ is a symmetric polynomial  in $r$ variables of degree $d$, we can write $\varphi = \tilde{P}(c_{1},\ldots,c_{r})$ for some polynomial $\tilde{P}$. Then we can define
$$ \varphi(K) := \tilde{P}(c_{1}(K),\ldots,c_{r}(K))$$
\noindent which is a closed form on $X$. Therefore, we have a cohomology class of $E$ on $X$, $\varphi (E) := \varphi(K) \in H^{2d}(X; \mathbb{C})$. 
We shall use the following elementary facts. Let $E$ be a complex vector bundle
of rank $r<n$, and let $\nabla$ be any connection on $E$. Since the Chern forms
are obtained from an $r\times r$ curvature matrix, one has
$
c_p(\nabla)=0 \qquad \text{for all } p>r.
$
Consequently, if
$
\varphi=c_{p_1}\cdots c_{p_k}
$
is a Chern monomial and some $p_i>r$, then
$
\varphi(\nabla)=0.$
Moreover, on any open set $U\subset X$ over which $E$ is trivial, with local
frame $(s_1,\ldots,s_r)$, one may choose the flat local connection defined by
\[
\nabla s_i=0,\qquad i=1,\ldots,r.
\]
Its curvature matrix is identically zero; hence all positive Chern forms vanish
on $U$. In particular,
\[
c_i(E)|_U=0\in H^{2i}(U,\mathbb C),\qquad i>0.
\]
Let  $E_i$, $i=0,\ldots,q$ be a family of complex vector bundle on a complex manifold $M$, $ \xi$ be the virtual bundle $\displaystyle \xi=\sum _{i=0} ^{q}(-1)^i E_i$ and $\varphi$ be a  homogeneous symmetric polynomial. By the standard definition of characteristic classes of virtual bundles, we have $$\varphi(\xi)=\sum _{l} \varphi_l ^{(0)} (E_0)\varphi_l ^{(1)}(E_1)\cdots\varphi_l ^{(q)}(E_q),$$ where $\varphi_l ^{(i)}(E_i)$ is a polynomial in the Chern classes of $E_i$, for each $i$ and $l$.
If $\nabla^{(i)}$ is a connection for $E_i$ consider the family $\nabla^{\bullet}=(\nabla^{(q)},\ldots,\nabla^{(0)}).$ Then $\varphi(\xi)$ is the cohomology class of the differential form 
\begin{equation} \varphi(\nabla^{\bullet})=\sum _{l} \varphi_l ^{(0)} (\nabla^{(0)} )\wedge\varphi_l ^{(1)}(\nabla^{(1)})\wedge\cdots\wedge\varphi_l ^{(q)}(\nabla^{(q)}).\end{equation}
For further details, we refer the reader to \cite{Suwa}.

Given a holomorphic foliation $\cF$ of codimension $k$ on a complex manifold $X,$ with its short exact sequence
\[
\begin{tikzcd}
0 \arrow[r] & T_{\cF} \arrow[r] & T_X \arrow[r] & N_{\cF} \arrow[r] & 0
\end{tikzcd}
\]
we have a very well defined element $\varphi(N_{\cF}) \in H^{2d}(U;\mathbb{C})$, where $d = \deg(\varphi).$ From the commutative diagram with $S$ a compact connected component of the singular set of $\cF$
\[
\begin{tikzcd}
H^{2d}(X,U; \mathbb{C}) \arrow[r] \arrow[d, "A"'] &
H^{2d}(X; \mathbb{C}) \arrow[d, "P"] \\
H_{2n-2d}(S; \mathbb{C}) \arrow[r, "i_{\ast}"'] &
H_{2n-2d}(X; \mathbb{C})
\end{tikzcd}
\]
 we have the Baum-Bott Theorem \cite{BB1}
$$P(\varphi(N_{\cF})) = \sum_{ S} i_{\ast}\Res_{\varphi}(\cF, S).$$
If $S$ is an isolated singular point, and $\cF$ of a one-dimensional holomorphic foliation given in a local chart by the vector field 
$v = \sum_{i=1}^{n} a_i \partial /\partial z_i$
 then the above expression became 
$$\int_{X} \varphi(N_{\cF}) = \Res_{\varphi}(\cF, S),$$

\noindent where the residue is a Grothendieck residue defined by

$$\Res_{\varphi}(\cF, S) = \Res_{p} \left[
\begin{array}{cc}
\varphi(Jv)  \\
{a_1,\dots,a_{n}}
\end{array}\right] := \dfrac{1}{(2\pi {   i})^{n}}\int_{\Gamma} \dfrac{\varphi(Jv)}{a_1 \ldots a_n}, $$

\noindent where $\Gamma$ is the corresponding real $n$-cycle. Moreover, if we consider \(\cF\) logarithmic along the reduced divisor
\(D=\{f=0\}\), then \(v(f)=kf\) for some holomorphic function \(k\).
On the smooth locus \(D_{\mathrm{reg}}\) one has the usual tangent-normal
sequence
\[
\begin{tikzcd}
0 \arrow[r] &
T_{D_{\mathrm{reg}}} \arrow[r] &
T_X|_{D_{\mathrm{reg}}} \arrow[r] &
N_{D_{\mathrm{reg}}/X} \arrow[r] &
0 .
\end{tikzcd}
\]
In general, we  have  the tangent sheaf
$
T_D=\mathcal Hom_{\mathcal O_D}(\Omega_D^1,\mathcal O_D).
$
The natural morphism
$$
T_X|_D\longrightarrow \mathcal O_D(D)$$
has image \(J_D(D)\), where \(J_D\subset\mathcal O_D\) is the Jacobian ideal of
\(D\). Thus one has the exact Jacobian-image sequence
\[
\begin{tikzcd}
0 \arrow[r] &
T_D \arrow[r] &
T_X|_D \arrow[r] &
J_D(D) \arrow[r] &
0 .
\end{tikzcd}
\]
Equivalently, viewing the quotient as an \(\mathcal O_X\)-sheaf supported on
\(D\), this is encoded by the exact sequence
\[
0\longrightarrow T_X(-\log D)
\longrightarrow T_X
\longrightarrow J_D(D)
\longrightarrow 0.
\]
  Now, for a homogeneous symmetric polynomial $\varphi$ of degree $n-1$ and the virtual tangent bundle on $D$ defined by $\tau := TX|_{D}- N_{D}$, we write $\varphi = \sum_{l}\varphi_{l}.\varphi_{l}^{'}$ so that $\varphi(\tau) = \sum_{l}\varphi_{l}(TX).\varphi_{l}^{'}(N_{D})$ and set $\varphi(H) = \sum_{l}\varphi_{l}(Jv).\varphi_{l}^{'}(k)$. And we have the following residue formula
\begin{align*}
\Res_{\varphi}(\cF,\tau,p)
&=\Res_{p}
\left[
\begin{array}{c}
\varphi(H)\,dz_{1}\wedge \cdots \wedge dz_{n-1}  \\
{\bar{a}}_1,\ldots,\bar{a}_{n-1}
\end{array}
\right]_{D} \\[15pt]
&=\Res_{p}
\left[
\begin{array}{c}
\varphi(H)\,dz_{1}\wedge \cdots \wedge dz_{n-1}\wedge df  \\
a_1,\ldots,a_{n-1},f
\end{array}
\right],
\end{align*}

\noindent where $\bar{a}_i = a_i|_{D}.$

\subsection{Logarithmic Holomorphic Foliations}
Let $X$ be a compact complex manifold of dimension $n \geq 3$, let $D=\{f=0\} \subset X$ be a reduced divisor, and let $\cF$ be a one-dimensional holomorphic foliation  logarithmic along $D$ with singular set of codimension at least two and induced by the vector field $v \in H^0(X, T_X \otimes \mathcal{L}^{\ast}).$ We have the short exact sequence 
\[
\begin{tikzcd}
0 \arrow[r] &
T_{\cF} \arrow[r] &
T_X(-\log D) \arrow[r] &
N_{\cF/X}^{\log} \arrow[r] &
0
\end{tikzcd}
\]
where
\[
T_X(-\log D):=\ker\bigl(T_X\xrightarrow{\ \phi_f\ }\cO_D(D)\bigr)
\]
is the logarithmic tangent sheaf. We assume that \(T_{\cF}\) is saturated in
\(T_X(-\log D)\) and define
\[
N_{\cF/X}^{\log}:=T_X(-\log D)/T_{\cF}.
\]
Thus \(N_{\cF/X}^{\log}\) is torsion-free.
Moreover, we have the exact sequence
\[
0\longrightarrow T_X(-\log D)
\longrightarrow T_X
\longrightarrow J_D(D)
\longrightarrow 0,
\]
where $J_D\subset \cO_D$ denotes the Jacobian ideal of $D$ on $D$, and
$J_D(D)$ is regarded as a coherent $\cO_X$-sheaf supported on $D$.


%

 \begin{remark}
Let \(U := X \setminus (\Sing(\cF) \cup \Sing D)\).
On \(U\), both \(T_X(-\log D)_{|U}\) and $$T_X(-\log D)/T_{\cF}$$ are holomorphic vector bundles/quotients. All differential-form computations are done on this locally free locus and are interpreted in cohomology via the determinant line bundles. The explicit Grothendieck-residue formulae in adapted coordinates are stated at points of \(D_{\mathrm{reg}}\); at points of \(\Sing D\) one has to use the relative residue theory on the singular hypersurface \(D\).
 \end{remark}
 
\begin{definition} 
Let $f \in \cO_{X,x}$ be a reduced local equation of $D$ at $x \in X.$ With local coordinates $z_1, \ldots, z_n,$ and $\partial_i=\partial/\partial_{z_i},$
\begin{center}
$ (J_D)_x := (\overline{\partial_1 f}, \ldots, \overline{\partial_n f}) \subset \cO_{D,x} := \cO_{X,x}/(f).$
\end{center}
\end{definition}
If $f' = uf$ with $u \in \cO_{X,x}^{\times},$ then $\overline{\partial_i f'} = \overline{u} \overline{\partial_i f},$so $J_D$ is well-defined on $D.$

\begin{proposition} \label{prop:exact-sequence}
Fix a local reduced equation $f$ of $D.$ Define
$$\phi_f:T_X \to \cO_D(D), \;\;\; \phi_f(\xi) = \Big(\frac{\xi(f)}{f} \Big)_{|D} s_D.$$
Then $\ker(\phi_f) = T_X(-\log D)$ and $Im(\phi_f) = J_D(D).$ Hence there is a canonical short exact sequence 
\begin{equation} \label{eq:exact-sequence}
0 \to T_X(-\log D) \to T_X \xrightarrow{\phi_f} J_D (D) \to 0.
\end{equation}
\end{proposition}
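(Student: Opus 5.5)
The plan is to work locally at a point \(x\in X\), since both the kernel and the image of \(\phi_f\) are local notions, and then check that everything glues. First I will verify that \(\phi_f\) is well defined, independently of the choice of reduced local equation. Here \(s_D\) denotes the canonical local generator \(1/f\) of \(\cO_D(D)\), up to the identification fixed by \(f\). For a fixed \(f\) the expression \(\xi(f)/f\) is a meromorphic function, so I should read \(\phi_f(\xi)\) as the class of \(\xi(f)\) in \(\cO_X(D)/\cO_X=\cO_D(D)\). Concretely, \(\phi_f(\xi)=\overline{\xi(f)}\cdot s_D\) with \(s_D\leftrightarrow 1/f\).

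If \(f'=uf\) with \(u\) a unit, then \(\xi(f')=u\,\xi(f)+f\,\xi(u)\). Hence \(\xi(f')/f' \equiv \xi(f)/f\) modulo \(\cO_X\). This gives a canonical morphism \(T_X\to\cO_X(D)|_D\), namely \(\xi\mapsto \xi(f)/f \bmod \cO_X\), and the local definitions glue.

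\emph{Kernel.} The class of \(\xi(f)/f\) in \(\cO_X(D)/\cO_X\) vanishes if and only if \(\xi(f)/f\) is holomorphic, that is, \(\xi(f)\in(f)\). By the definition of \(T_X(-\log D)\) in the paper, this says exactly that \(\xi\) is logarithmic along \(D\). So \(\ker\phi_f=T_X(-\log D)\).

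\emph{Image.} Under the trivialisation \(\cO_D(D)\cong\cO_D\cdot s_D\), the image of \(\xi=\sum_i \xi_i\partial_i\) is \(\sum_i \overline{\xi_i}\,\overline{\partial_i f}\,s_D\). As the \(\xi_i\) range over \(\cO_{X,x}\), their restrictions range over all of \(\cO_{D,x}\), because restriction \(\cO_{X,x}\to\cO_{D,x}\) is surjective. So the image is precisely \((\overline{\partial_1 f},\ldots,\overline{\partial_n f})\,s_D=(J_D)_x\cdot s_D\), which is \(J_D(D)\) at \(x\). The remark after the definition shows \(J_D\) is independent of \(f\), so this identification is also canonical.

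Exactness then follows formally: a morphism together with its kernel and its image gives the short exact sequence \eqref{eq:exact-sequence}.

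The only point needing care is the well-definedness and gluing. One must avoid interpreting \((\xi(f)/f)|_D\) literally as a function, since it may have a pole. The correct reading is as a section of \(\cO_X(D)/\cO_X\). Once that convention is fixed, the rest is immediate.
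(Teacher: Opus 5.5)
Your proposal is correct and follows the paper's proof essentially step by step: the kernel via $\xi(f)\in(f)$, the image via $\sum_i\overline{\xi_i}\,\overline{\partial_i f}\,s_D=(J_D)_x\cdot s_D$, and independence of $f$ via $\xi(uf)/(uf)=\xi(f)/f+\xi(u)/u$ modulo $\cO_X$. Your explicit reading of $\phi_f(\xi)$ as the class of $\xi(f)/f$ in $\cO_X(D)/\cO_X\cong\cO_D(D)$ is the convention the paper relies on in its independence step, and it is the right way to make sense of the literal formula $(\xi(f)/f)|_D\,s_D$ and of the kernel condition.
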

\begin{proof}
Write $\xi = \sum a_i \partial_i,$ then $\xi(f) = \sum a_i \partial_i f.$ By definition, $\xi \in T_X(-\log D)$ iff $\xi(f) \in (f),$ i.e. $(\xi(f)/f)|_D = 0.$ Thus $\ker \phi_f = T_X(-\log D).$ The image consists of linear combinations $\sum \overline{a_i}\overline{\partial_i f},$ hence equals $J_D \cdot s_D = J_D(D).$
If $f'=uf$ with $u \in \cO_{X,x}^{\times},$ then
$$\frac{\xi(f')}{f'} = \frac{\xi(uf)}{uf} = \frac{\xi(f)}{f} + \frac{\xi(u)}{u}.$$
The term $\xi(u)/u$ is holomorphic along $D,$ hence defines a section of $\cO_X \subset \cO_X(D)$ and maps to zero in the quotient $\cO_D(D) = \cO_X(D)/\cO_X.$ Therefore, $\phi_{f'} = \phi_f$ as morphisms $T_X \to \cO_D(D),$ and both kernel and image subsheaves are independent of the choice of $f.$
\end{proof}

Suppose that $D$ is a free divisor at $p,$ by Saito’s criterion \cite{Saito}, there exist $n$ vector fields germs $\delta_1, \ldots,\delta_n  \in H^0(T_{X,p}(-\log D))$ such that $\gamma = \{\delta_1, \ldots ,\delta_n\}$ is a system of $\cO_{X,p}$-free basis for $T_{X,p}(-\log D)$ and
$\delta_j = \sum_{i=1}^n a_{ij} \partial /\partial z_i$
is such that $\det(a_{ij}) = g \cdot f,$ where $g$ is a unit and $D = \{f = 0\}.$  Moreover, $$[\delta_i, \delta_j] =  \displaystyle\sum_{k=1}^n \delta_{ij}^k \delta_k,$$ where $\delta_{ij}^k \in \cO_p,$ for all $i,j,k \in \{1, \ldots ,n\}.$
The basis $\{\delta_1, \ldots, \delta_n\}$ is called Saito’s basis of $T_{X,p}(-\log D))$ at $p.$

\begin{corollary} 
 Let $x \in X.$ If $D$ has normal crossings at $x$ with local equation $z_1 \ldots z_r=0$ in coordinates $(z_1, \ldots, z_r, w_{r+1}, \ldots, w_n),$ then 
  \(T_X(-\log D)_x\) is generated, as an \(\cO_{X,x}\)-module, by
\(z_1\frac{\partial}{\partial z_1},\ldots,
z_r\frac{\partial}{\partial z_r},
\frac{\partial}{\partial w_{r+1}},\ldots,
\frac{\partial}{\partial w_n}\).
 In general  , $T_X(-\log D) = \ker \phi$ by Proposition \ref{prop:exact-sequence}.
\end{corollary}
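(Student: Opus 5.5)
The corollary has two parts. The first describes \(T_X(-\log D)_x\) in the normal crossing case. The second reduces, by Proposition~\ref{prop:exact-sequence}, to the already established identity \(\ker\phi_f=T_X(-\log D)\). So the work is a direct computation with the criterion \(\xi\in T_X(-\log D)\iff \xi(f)\in(f)\), taking \(f=z_1\cdots z_r\).

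First, the listed fields are logarithmic. For \(1\le i\le r\),
\[
z_i\frac{\partial}{\partial z_i}(f)=f,
\qquad
\frac{\partial}{\partial w_j}(f)=0 .
\]
Both results lie in \((f)\), so every \(\cO_{X,x}\)-combination of these fields lies in \(\ker\phi_f\).

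Next, every logarithmic field is such a combination. Write \(\xi=\sum_{i\le r}a_i\,\partial/\partial z_i+\sum_{j>r}b_j\,\partial/\partial w_j\). Then
\[
\xi(f)=\sum_{i=1}^r a_i\,\frac{f}{z_i}.
\]
Suppose \(\xi(f)=hf\) for some holomorphic \(h\). Fix \(i\) and reduce modulo \(z_i\). Every term with index \(k\neq i\) contains the factor \(z_i\), so it vanishes. What remains is \(a_i\prod_{k\ne i}z_k\equiv 0 \pmod{z_i}\).

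This is the only step needing care. Since \(\cO_{X,x}\) is a UFD and \(z_i\) is a prime element not dividing \(\prod_{k\ne i}z_k\), we conclude \(z_i\mid a_i\). Write \(a_i=z_i c_i\). Then
\[
\xi=\sum_{i=1}^r c_i\,z_i\frac{\partial}{\partial z_i}+\sum_{j>r} b_j\,\frac{\partial}{\partial w_j},
\]
which proves generation.

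As a sanity check, the determinant of the coefficient matrix of these \(n\) fields is \(z_1\cdots z_r=f\). By Saito's criterion quoted above, they form a free basis, so normal crossing divisors are free.

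Finally, the ``in general'' clause is exactly the statement \(\ker\phi_f=T_X(-\log D)\) of Proposition~\ref{prop:exact-sequence}. That statement holds for any reduced local equation and is independent of the choice of \(f\), so nothing further is needed.
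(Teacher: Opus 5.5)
Your proposal is correct and follows the same route as the paper: you check that \(z_i\,\partial/\partial z_i\) and \(\partial/\partial w_j\) send \(f=z_1\cdots z_r\) into \((f)\), and you show that every logarithmic field is an \(\cO_{X,x}\)-combination of these fields. The paper only asserts the converse direction, whereas your reduction modulo \(z_i\) (using that \(z_i\) is prime and does not divide \(\prod_{k\ne i}z_k\), so \(z_i\mid a_i\)) actually supplies the divisibility argument it leaves out.
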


\begin{proof}
$(z_{\alpha} \partial{z_{\alpha}})(z_1 \ldots z_r) = z_1 \ldots z_r \in (f)$ and $\partial_{w_j}(f)=0 \in (f).$ Conversely, any vector field preserving $(f)$ is an $\cO_X$-combination of these generators.    
\end{proof}

\begin{proposition}  
The one-dimensional holomorphic foliation sequence is
\begin{equation} \label{eq:fol-sequence}
0 \to T_{\cF} \to T_X \to N_{\cF} \to 0,
\end{equation}
where $N_{\cF}:=T_X/T_{\cF}$ is torsion-free of rank $n-1.$ Comparing (\ref{eq:exact-sequence}) and (\ref{eq:fol-sequence}) with $T_{\cF} \subset T_X(-\log D)$ yields the exact sequence 
\begin{equation} \label{eq:normal-comparison}
0 \to T_X(- \log D)/T_{\cF} \to N_{\cF} \to J_D(D) \to 0.
\end{equation}
\end{proposition}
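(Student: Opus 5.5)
The plan is a diagram chase, namely the snake lemma applied to the three short exact sequences already available. We place the sequence \eqref{eq:fol-sequence} and the logarithmic tangent sequence
\[
0\to T_{\cF}\to T_X(-\log D)\to N_{\cF/X}^{\log}\to 0
\]
in a commutative diagram with exact rows. The top row is the logarithmic sequence, the bottom row is \eqref{eq:fol-sequence}, and the vertical maps are as follows:
\begin{itemize}
\item the identity on $T_{\cF}$;
\item the inclusion $T_X(-\log D)\hookrightarrow T_X$;
\item the induced map $N_{\cF/X}^{\log}\to N_{\cF}$, given by $\xi \bmod T_{\cF}\mapsto \xi \bmod T_{\cF}$.
\end{itemize}
This induced map is well defined because $T_{\cF}\subset T_X(-\log D)$ by the logarithmic hypothesis, and both squares commute by construction.

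Next we apply the snake lemma to this morphism of short exact sequences. The left vertical map is the identity, so its kernel and cokernel vanish. The middle vertical map is injective, and by Proposition~\ref{prop:exact-sequence} its cokernel is $T_X/T_X(-\log D)\cong J_D(D)$ via $\phi_f$. The six-term sequence
\[
0\to 0\to 0\to \ker(N^{\log}_{\cF/X}\to N_{\cF})\to 0\to J_D(D)\to \operatorname{coker}(N^{\log}_{\cF/X}\to N_{\cF})\to 0
\]
then shows two things. First, the map $N_{\cF/X}^{\log}\to N_{\cF}$ is injective. Second, its cokernel is isomorphic to $J_D(D)$.

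To make the identification explicit, the composite $N_{\cF}\to J_D(D)$ is $\xi \bmod T_{\cF}\mapsto \phi_f(\xi)$. This is well defined because $\phi_f$ vanishes on $T_X(-\log D)\supset T_{\cF}$. It is surjective because $\phi_f$ is. Its kernel is $T_X(-\log D)/T_{\cF}$, which can also be checked directly: if $\phi_f(\xi)=0$, then $\xi\in T_X(-\log D)$. Together these give the exactness of \eqref{eq:normal-comparison}.

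The only point needing care is canonicity, i.e. independence of the local equation $f$. This is already settled in Proposition~\ref{prop:exact-sequence}, where $\phi_{uf}=\phi_f$, so the local sequences glue to a global exact sequence of $\cO_X$-modules. The torsion-freeness of $N_{\cF}$ and its rank $n-1$ follow from saturation of $T_{\cF}$ in $T_X$ and from $\rk T_{\cF}=1$; these are not needed for exactness. I expect no real obstacle beyond this bookkeeping.
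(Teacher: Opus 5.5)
Your proposal is correct and is essentially the paper's argument. The paper also applies the snake lemma to the chain $T_{\cF}\subset T_X(-\log D)\subset T_X$ together with the quotient $T_X\xrightarrow{\phi_f}J_D(D)$, which is exactly your morphism of short exact sequences; you additionally spell out the kernel/cokernel bookkeeping and the explicit induced map $N_{\cF}\to J_D(D)$, which the paper leaves implicit.
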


\begin{proof}
Applying the Snake Lemma to the diagram given by $T_{\cF} \subset T_X(- \log D) \subset T_X$ and the quotient $T_X \to J_D(D).$ 
\end{proof}


We have the   diagram

 \[
\begin{tikzcd}[row sep=large, column sep=large]
& 0 \arrow[d] & 0 \arrow[d] & 0 \arrow[d] & \\
0 \arrow[r] &
T_{\cF} \arrow[r] \arrow[d] &
T_X \arrow[r] \arrow[d] &
N_{\cF} \arrow[r] \arrow[d] &
0 \\
0 \arrow[r] &
T_X(-\log D) \arrow[r] \arrow[d] &
T_X \arrow[r] \arrow[d] &
J_D(D) \arrow[r] \arrow[d] &
0 \\
0 \arrow[r] &
N_{\cF/X}^{\log} \arrow[r] \arrow[d] &
N_{\cF} \arrow[r] \arrow[d] &
J_D(D) \arrow[r] \arrow[d] &
0 \\
& 0 & 0 & 0 &
\end{tikzcd}
\]
Since \(T_{\cF}\) is saturated in \(T_X(-\log D)\), the left-hand quotient in
\eqref{eq:normal-comparison} is precisely \(N^{\log}_{\cF/X}\). Taking determinants
in \eqref{eq:normal-comparison} gives
\[
\det N_{\cF}
\simeq
\det N^{\log}_{\cF/X}\otimes \det J_D(D)
\]
in codimension one. Consequently,
\[
c_1(N_{\cF})
=
c_1\bigl(N^{\log}_{\cF/X}\bigr)
+
c_1\bigl(J_D(D)\bigr)
=
c_1\bigl(N^{\log}_{\cF/X}\bigr)+D,
\]
since \(c_1\bigl(J_D(D)\bigr)=D\). We shall only use this first-Chern-class
identity, not an unqualified identity of total Chern classes for the coherent
sheaves involved.

\subsection{Pfaff systems, the trace form, and change of frame}

Motivated by the flag formalism for holomorphic foliations \cite{BCL}, applied here to the
  inclusions
\[
T_{\cF}\subset T_X(-\log D)\subset T_X,
\]
we work locally on the regular locus of \(\cF\) and choose Pfaff data adapted to
this pair of transverse structures. Thus there exists a decomposable
\((n-1)\)-form
\[
\omega_{12}=\eta_1\wedge\cdots\wedge\eta_{n-1}
\]
such that
\[
\omega_1:=\omega_2\wedge\omega_{12}
=
h\cdot \eta_1\wedge\eta_2\wedge\cdots\wedge\eta_{n-1}
\]
induces the foliation \(\cF\).
From the integrability condition we can write
\[
d\eta_u=\sum_{v=1}^{n-1}\theta_{uv}\wedge\eta_v,
\qquad 1\leq u\leq n-1,
\]
or
\[
\begin{array}{cccc}
\left\{
\begin{array}{llccc}
d\eta_1
=
\theta_{11}\wedge\eta_1
+
\theta_{12}\wedge\eta_2
+\cdots+
\theta_{1\,n-1}\wedge\eta_{n-1}, \\[4pt]
d\eta_2
=
\theta_{21}\wedge\eta_1
+
\theta_{22}\wedge\eta_2
+\cdots+
\theta_{2\,n-1}\wedge\eta_{n-1}, \\
\vdots \\
d\eta_{n-1}
=
\theta_{n-1\,1}\wedge\eta_1
+
\theta_{n-1\,2}\wedge\eta_2
+\cdots+
\theta_{n-1\,n-1}\wedge\eta_{n-1}.
\end{array}
\right.
\end{array}
\]
Then, we define the trace form
\begin{equation}\label{eq:theta12}
\theta^{12}:=\operatorname{tr}(\theta)
=
\sum_{u=1}^{n-1}\theta_{uu}.
\end{equation}

Now, passing from the ordinary transverse structure to the logarithmic one, we
use the integrability condition of \(\cF\) induced by
\[
\omega_1
=
\widetilde{\eta}_1\wedge\eta_2\wedge\cdots\wedge\eta_{n-1},
\]
where \(\widetilde{\eta}_1=h\eta_1\). Thus
\[
\begin{array}{cccc}
\left\{
\begin{array}{llccc}
d\widetilde{\eta}_1
=
\widetilde{\theta}_{11}\wedge\widetilde{\eta}_1
+
\widetilde{\theta}_{12}\wedge\eta_2
+\cdots+
\widetilde{\theta}_{1\,n-1}\wedge\eta_{n-1}, \\[4pt]
d\eta_2
=
\widetilde{\theta}_{21}\wedge\widetilde{\eta}_1
+
\widetilde{\theta}_{22}\wedge\eta_2
+\cdots+
\widetilde{\theta}_{2\,n-1}\wedge\eta_{n-1}, \\
\vdots \\
d\eta_{n-1}
=
\widetilde{\theta}_{n-1\,1}\wedge\widetilde{\eta}_1
+
\widetilde{\theta}_{n-1\,2}\wedge\eta_2
+\cdots+
\widetilde{\theta}_{n-1\,n-1}\wedge\eta_{n-1}.
\end{array}
\right.
\end{array}
\]
We define again
\begin{equation}\label{eq:theta12til}
\widetilde{\theta}^{12}
:=
\operatorname{tr}(\widetilde\theta)
=
\sum_{u=1}^{n-1}\widetilde\theta_{uu}.
\end{equation}

\begin{lemma}  \label{lem:pfaff-frame}
On overlaps, if $\eta' = g \cdot \eta$ with $g:U \to GL_{n-1}(\mathbb{C})$ holomorphic, then $\theta'_{uv} = (g^{-1} \theta g + g^{-1}dg)_{uv}$ and $\theta^{12'} = \theta^{12} + d \log \det g.$ Thus the classes and boundary integrals defined below are independent of the Pfaff data.  
\end{lemma}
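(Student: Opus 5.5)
We compute directly from the defining relations \(d\eta_u=\sum_v\theta_{uv}\wedge\eta_v\). Write \(\eta'=g\eta\), that is, \(\eta'_u=\sum_v g_{uv}\eta_v\), with \(g\) holomorphic and invertible on the overlap.

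First differentiate:
\[
d\eta'=dg\wedge\eta+g\,d\eta=dg\wedge\eta+g\,\theta\wedge\eta .
\]
Next substitute \(\eta=g^{-1}\eta'\), which gives
\[
d\eta'=\bigl(dg\,g^{-1}+g\theta g^{-1}\bigr)\wedge\eta'.
\]
So one admissible choice of the new connection matrix is \(\theta'=g\theta g^{-1}+dg\,g^{-1}\). This agrees with the stated form \(g^{-1}\theta g+g^{-1}dg\) once we use the convention \(\eta=g\eta'\), i.e.\ \(g\) relating the frames in the opposite direction. The two conventions differ only by replacing \(g\) with \(g^{-1}\) and by the ordering of row versus column vectors. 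I will fix the convention so that the formula reads exactly as stated.

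The one genuine subtlety is that \(\theta\) is not uniquely determined by \(\eta\). One may add to \(\theta\) any matrix \(\alpha\) with \(\sum_v\alpha_{uv}\wedge\eta_v=0\). By Cartan's lemma, on the regular locus where \(\eta_1,\dots,\eta_{n-1}\) are pointwise independent, such an \(\alpha\) has the form \(\alpha_{uv}=\sum_w c_{uvw}\eta_w\) with \(c_{uvw}=c_{uwv}\). Hence \(\operatorname{tr}\alpha=\sum_{u,w}c_{uuw}\eta_w\) lies in the ideal generated by the \(\eta\)'s. Its wedge with \(\omega_1\), and its restriction to the leaves, both vanish. So the trace form is well defined modulo that ideal, and this is all that the boundary integrals defined below will see. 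This is the step I expect to need care.

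Taking traces and using cyclicity, \(\operatorname{tr}(g^{-1}\theta g)=\operatorname{tr}\theta\). Jacobi's formula gives \(\operatorname{tr}(g^{-1}dg)=d\log\det g\). Therefore
\[
\theta^{12'}=\theta^{12}+d\log\det g
\]
modulo the ambiguity just described, and the same argument applies verbatim to \(\widetilde\theta^{12}\).

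For the independence claim, note that \(d\theta^{12}\) is unchanged, since \(d\,d\log\det g=0\). So the first Chern form of the connection built from \(\theta^{12}\) is frame independent. Any transgression or boundary integral changes only by \(\int_{\partial}d\log\det g\wedge(\text{closed form})\). This is exact on a boundary sphere, or it is an integral of an exact form over a closed cycle, so it vanishes by Stokes. It follows that the classes and boundary integrals do not depend on the Pfaff data.
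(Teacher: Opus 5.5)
Your core step is the same as the paper's proof, which consists only of taking the trace of \(\theta'=g^{-1}\theta g+g^{-1}dg\) and using \(\operatorname{tr}(g^{-1}\theta g)=\operatorname{tr}\theta\) and \(\operatorname{tr}(g^{-1}dg)=d\log\det g\). You go further than the paper by deriving the transformation law from \(d\eta_u=\sum_v\theta_{uv}\wedge\eta_v\), by discussing the non-uniqueness of \(\theta\), and by spelling out the Stokes argument.

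One remark in your write-up is wrong and should be dropped. Your formula \(\theta'=g\theta g^{-1}+dg\,g^{-1}\) for \(\eta'=g\eta\) is correct. But switching to \(\eta=g\eta'\), i.e.\ replacing \(g\) by \(g^{-1}\), gives \(\theta'=g^{-1}\theta g+d(g^{-1})\,g=g^{-1}\theta g-g^{-1}dg\), whose trace is \(\theta^{12}-d\log\det g\). So ``fixing the convention'' that way reverses the sign of the identity you want to prove. The displayed matrix formula is really the row-frame version: for \(\eta'_v=\sum_u\eta_u g_{uv}\), it is the transpose \(\theta^{\top}\) that transforms as \(g^{-1}\theta^{\top}g+g^{-1}dg\). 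The fix is simple: keep the hypothesis \(\eta'=g\eta\) exactly as stated, and take the trace of your own formula using \(\operatorname{tr}(dg\,g^{-1})=d\log\det g\).

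Separately, you assert rather than prove that the Cartan ambiguity \(\tau\in(\eta_1,\dots,\eta_{n-1})\) is invisible to the boundary integrals. For \(i=0\) the claim does hold. Put \(\theta_t=\theta^{12}+t\tau\); then \(\tfrac{d}{dt}\bigl(\theta_t\wedge(d\theta_t)^{n-1}\bigr)=n\,\tau\wedge(d\theta_t)^{n-1}-(n-1)\,d\bigl(\theta_t\wedge\tau\wedge(d\theta_t)^{n-2}\bigr)\). From \(d^2\eta_u=0\), the entries of \(d\theta-\theta\wedge\theta\) lie in the ideal, so \(d\theta^{12}\) does too; also \(d\tau\) lies in it by integrability. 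Hence \(\tau\wedge(d\theta_t)^{n-1}\) is a product of \(n\) elements of an ideal generated by \(n-1\) one-forms, and it vanishes. For \(i\geq1\), however, the corresponding term \(\tau\wedge(d\theta_t)^{n-1-i}\wedge F_D^i\) is not killed by this argument. That case needs additional input, which the paper does not supply either.
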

\begin{proof}
Taking the trace of \(\theta'=g^{-1}\theta g+g^{-1}dg\) gives
\[
\operatorname{tr}(\theta')
=
\operatorname{tr}(\theta)+\operatorname{tr}(g^{-1}dg)
=
\operatorname{tr}(\theta)+d\log\det g.
\]
\end{proof}

\begin{lemma}\label{lem:JD-local-potential}
Let \(D\) also denote the cohomology class \(c_1(\mathcal O_X(D))\in
H^2(X,\mathbb C)\). Choose a smooth closed \(2\)-form
 $
F_D\in A^2(X)
$
representing \(D\). On every sufficiently small coordinate open set
\(V\subset X\) there exists a \(1\)-form \(\beta\in A^1(V)\) such that
$
d\beta=F_D|_V.
$
Moreover, if \(\beta'\) is another local primitive of \(F_D|_V\), then, after
shrinking \(V\) if necessary, there exists a smooth function \(\rho\) on \(V\)
such that
$
\beta'-\beta=d\rho.
$
Consequently, for every pair of closed forms \(B,C\) on \(V\), the forms
\[
\beta'\wedge B^r\wedge C^s
\quad\text{and}\quad
\beta\wedge B^r\wedge C^s
\]
differ by an exact form. In particular, the boundary integrals below are
independent of the chosen local primitive \(\beta\).
\end{lemma}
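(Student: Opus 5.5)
The lemma splits into two independent parts: existence of a local primitive, and uniqueness up to an exact term together with its consequence for the boundary forms. Both are Poincaré lemma arguments.

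\emph{Existence.} Choose \(V\) to be a coordinate ball, or any contractible coordinate open set. The form \(F_D|_V\) is a smooth closed \(2\)-form, because \(F_D\) represents a de Rham class and is therefore closed. The smooth Poincaré lemma gives \(H^2_{dR}(V;\mathbb C)=0\), so there is some \(\beta\in A^1(V)\) with \(d\beta=F_D|_V\). This part needs no input from the foliation or from \(D\) beyond the choice of \(F_D\).

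\emph{Uniqueness.} Let \(\beta'\) be another primitive. Then
\[
d(\beta'-\beta)=F_D|_V-F_D|_V=0 ,
\]
so \(\beta'-\beta\) is a closed \(1\)-form. Again by the Poincaré lemma (first de Rham cohomology of a ball vanishes; shrink \(V\) to a ball if it was not one already), there is a smooth function \(\rho\) on \(V\) with \(\beta'-\beta=d\rho\).

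\emph{Consequence.} Let \(B,C\) be closed forms on \(V\). Since \(B\) and \(C\) are closed, the Leibniz rule gives
\[
d\bigl(\rho\,B^r\wedge C^s\bigr)=d\rho\wedge B^r\wedge C^s .
\]
Therefore
\[
\beta'\wedge B^r\wedge C^s-\beta\wedge B^r\wedge C^s=d\bigl(\rho\,B^r\wedge C^s\bigr),
\]
which is exact.

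\emph{Independence of boundary integrals.} Consider an integral of such a form over a closed cycle in \(V\), for instance the boundary of a small ball around a singular point, or the real cycle \(\Gamma\) used in the residues. By Stokes' theorem the integral of an exact form over a closed manifold vanishes, so the value does not depend on the choice of \(\beta\).

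The only point requiring care is that \(B\) and \(C\) must be closed on the region where we integrate. For the forms actually used, \(B\) and \(C\) will be Chern-type forms such as \(d\theta^{12}\) and \(F_D\), or the corresponding Bott-type trace forms. These are closed on the relevant open set, for example \(V\) minus the singular point, and the exactness statement is then applied there. For this to work, \(\rho\) must be globally defined on that punctured set, not only on the ball.

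This is why I would apply the Poincaré lemma to \(\beta'-\beta\) on the full ball \(V\). Both \(\beta\) and \(\beta'\) are smooth on all of \(V\), so \(\rho\) exists on all of \(V\) and in particular on \(V\) minus the singular set. This is the main (mild) obstacle, and choosing \(V\) contractible resolves it.
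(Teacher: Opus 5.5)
Your proof is correct and follows the paper's argument essentially verbatim. You apply the Poincar\'e lemma to get \(\beta\), apply it again to the closed difference to get \(\beta'-\beta=d\rho\), and use \(d(\rho\,B^r\wedge C^s)=d\rho\wedge B^r\wedge C^s\) (valid since \(B,C\) are closed), so integrals over closed cycles agree. Your extra remark is a sensible sharpening of the paper's ``after shrinking \(V\) if necessary'': taking \(V\) contractible from the start puts \(\rho\) on all of \(V\). The exactness then persists on \(V\) minus the singular point, where \(B,C\) are actually closed and where the link lies.
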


\begin{proof}
The existence of a smooth closed representative \(F_D\) of the class
\(D=c_1(\mathcal O_X(D))\) is standard de Rham theory. Since \(F_D\) is closed,
the Poincar\'e lemma gives, on every sufficiently small coordinate open set
\(V\), a \(1\)-form \(\beta\) satisfying \(d\beta=F_D|_V\).
If \(\beta'\) is another such primitive, then
\[
d(\beta'-\beta)=0.
\]
After shrinking \(V\) if necessary, the Poincar\'e lemma again gives a smooth
function \(\rho\) such that
$\beta'-\beta=d\rho.
$
Let \(B\) and \(C\) be closed forms on \(V\). Then
\[
\beta'\wedge B^r\wedge C^s
-
\beta\wedge B^r\wedge C^s
=
d\rho\wedge B^r\wedge C^s.
\]
Since \(dB=dC=0\), this is
\[
d\bigl(\rho\, B^r\wedge C^s\bigr).
\]
Thus the two forms differ by an exact form. Hence their integrals over closed
cycles, in particular over the links used below, are equal.
\end{proof}
 
\section{ Logarithmic gauge, transgressions, and their differentials}

Let $f$ be a local reduced equation of $D$ on $U.$ Passing from the block $T_{\cF} \subset T_X$ to $T_{\cF} \subset T_X(- \log D)$ gives the {\it logarithmic gauge}
\begin{equation} \label{eq:log-gauge}
\theta^{12} = \tilde{\theta}^{12} - d \log h, \;\;\;\; d\theta^{12} = d\tilde{\theta}^{12}:= F_{12}. 
\end{equation}
In an adapted chart along \(D_{\mathrm{reg}}\) one may take \(h=uf\), with \(u\) a unit;
therefore \(d\log h=d\log f+d\log u\), and the term \(d\log u\) contributes only an
exact transgression to the boundary integrals.

Fix once and for all a closed $2$-form $F_2$ representing $D$ (Lemma \ref{lem:JD-local-potential}), locally write $F_2= d \beta.$
For $0 \leq i \leq n-1$ define the $(2n-1)$- forms
\begin{equation}
    \psi_i := (2 \pi i)^{-n} \theta^{12} \wedge F_2^i \wedge F^{n-1-i}_{12},
\end{equation}

\begin{equation}
    \psi_i^{\log} := (2 \pi i )^{-n} \tilde{\theta}^{12} \wedge F_2^i \wedge F^{n-1-i}_{12},
\end{equation}

\begin{lemma}  \label{lem:exact-difference}
On $U$,
\begin{equation} \label{eq:phi-difference}
\phi_i:= \psi_i -  \psi_i^{\log} = -(2 \pi {   i})^{-n} d \log h \wedge F_2^i \wedge F^{n-1-i}_{12}.
\end{equation}
If $h'=uh$ with $u$ a unit, the right-hand side changes by an exact form; its integral over boundaries vanishes. 
\end{lemma}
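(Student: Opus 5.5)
The identity follows directly from the logarithmic gauge \eqref{eq:log-gauge}. Subtracting the definitions gives
\[
\psi_i-\psi_i^{\log}=(2\pi i)^{-n}\bigl(\theta^{12}-\widetilde\theta^{12}\bigr)\wedge F_2^i\wedge F_{12}^{n-1-i}.
\]
By \eqref{eq:log-gauge} we have \(\theta^{12}-\widetilde\theta^{12}=-d\log h\), which is exactly \eqref{eq:phi-difference}.

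Two points need checking. First, the curvature factor \(F_{12}\) is the same in \(\psi_i\) and \(\psi_i^{\log}\). This holds because \(d\theta^{12}=d\widetilde\theta^{12}\), since \(d(d\log h)=0\) on \(U\), where \(h\) is nonvanishing. Second, the gauge relation itself should be justified. Comparing the two Pfaff systems, \(\widetilde\eta_1=h\eta_1\) gives
\[
d\widetilde\eta_1=dh\wedge\eta_1+h\,d\eta_1=\bigl(d\log h+\theta_{11}\bigr)\wedge\widetilde\eta_1+\sum_{v\ge2}h\theta_{1v}\wedge\eta_v .
\]
Hence one may take \(\widetilde\theta_{11}=\theta_{11}+d\log h\), with the remaining diagonal entries unchanged modulo the ambiguity absorbed by Lemma~\ref{lem:pfaff-frame}. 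This is the frame change \(g=\mathrm{diag}(h,1,\dots,1)\), so Lemma~\ref{lem:pfaff-frame} gives \(\operatorname{tr}\widetilde\theta=\operatorname{tr}\theta+d\log\det g=\theta^{12}+d\log h\). That is \eqref{eq:log-gauge}, and it is well defined on \(U\) up to the choices already controlled there.

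For the second assertion, let \(h'=uh\) with \(u\) a unit. Then \(d\log h'-d\log h=d\log u\). On a small neighbourhood of a point, \(u\) admits a holomorphic logarithm, so \(d\log u=d(\log u)\) with \(\log u\) smooth. Since \(F_2\) and \(F_{12}\) are closed,
\[
d\log u\wedge F_2^i\wedge F_{12}^{n-1-i}=d\bigl(\log u\,F_2^i\wedge F_{12}^{n-1-i}\bigr).
\]
This is exact, and by Stokes' theorem its integral over any closed boundary cycle, such as a link sphere, vanishes. The argument is the same as in Lemma~\ref{lem:JD-local-potential}.

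The only delicate step is the existence of a global branch of \(\log u\) on the domain containing the link. If \(u\) is a unit on a ball around \(p\), the ball is simply connected and \(\log u\) exists there. Even without that, \(d\log u\) is closed, and \(\tfrac{1}{2\pi i}\int d\log u\) over loops is an integer. On \(S^{2n-1}\) with \(n\ge2\) there is no \(H^1\), so the form is exact on the link anyway.
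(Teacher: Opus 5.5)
Your proof is correct and follows the paper's argument. You substitute the logarithmic gauge $\theta^{12}=\widetilde\theta^{12}-d\log h$ into the difference of the definitions. You then use closedness of $F_2$ and $F_{12}$ to see that replacing $h$ by $uh$ adds the exact form $d\bigl(\log u\,F_2^i\wedge F_{12}^{n-1-i}\bigr)$. Your derivation of the gauge from $\widetilde\eta_1=h\eta_1$ is a useful step the paper omits. One correction: the paper takes $h=uf$ in adapted charts, so $h$ vanishes on $D_{\mathrm{reg}}\cap U$. The identity and $d(d\log h)=0$ therefore hold on $U\setminus D$, not on all of $U$ as you wrote.
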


\begin{proof}
Insert (\ref{eq:log-gauge}) into (\ref{eq:phi-difference}); use $d(d \log h)=0$ and that $F_2, F_{12}$ are closed.
\end{proof}

\begin{lemma}   \label{lem:differentials}
On $U,$
$
d \psi_i = d \psi_i^{\log} = (2 \pi { i})^{-n} F^{n-i}_{12} \wedge F_2^i.
$
\end{lemma}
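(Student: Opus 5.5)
The identity follows by differentiating the $(2n-1)$-forms $\psi_i$ and $\psi_i^{\log}$ directly on $U$ with the Leibniz rule, using only the closedness facts already established.

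\textbf{Closedness inputs.}
\begin{itemize}
\item $F_2$ is a closed $2$-form representing $D$, fixed via Lemma~\ref{lem:JD-local-potential}.
\item $F_{12}=d\theta^{12}$ is exact, hence closed.
\item By the logarithmic gauge \eqref{eq:log-gauge}, $\theta^{12}=\widetilde\theta^{12}-d\log h$. Since $d(d\log h)=0$ on $U$, where $h$ is holomorphic and nonvanishing away from $D$ and the singular set, we get $d\widetilde\theta^{12}=d\theta^{12}=F_{12}$.
\end{itemize}

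\textbf{Computation for $\psi_i$.} Since $\theta^{12}$ is a $1$-form and $F_2^i\wedge F_{12}^{n-1-i}$ is closed of even degree,
\[
d\psi_i=(2\pi i)^{-n}\,d\theta^{12}\wedge F_2^i\wedge F_{12}^{n-1-i}=(2\pi i)^{-n}F_{12}\wedge F_2^i\wedge F_{12}^{n-1-i}.
\]
All the forms $F_{12}$ and $F_2$ have even degree, so they commute. Collecting powers gives $(2\pi i)^{-n}F_{12}^{\,n-i}\wedge F_2^i$.

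\textbf{Computation for $\psi_i^{\log}$.} The same calculation with $\widetilde\theta^{12}$ in place of $\theta^{12}$, together with $d\widetilde\theta^{12}=F_{12}$, yields the identical expression.

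\textbf{Alternative via Lemma~\ref{lem:exact-difference}.} The difference $\phi_i=\psi_i-\psi_i^{\log}$ equals $-(2\pi i)^{-n}\,d\log h\wedge(\text{closed form})$. Since $d\log h$ is closed, $d\phi_i=0$, so $d\psi_i=d\psi_i^{\log}$ directly.

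\textbf{Main point to justify.} The only delicate step is that $\theta^{12}$ and $\widetilde\theta^{12}$ are genuinely defined on $U$, with $h$ a holomorphic unit there, so that $d\log h$ is a smooth closed form. On overlaps, Lemma~\ref{lem:pfaff-frame} shows the traces change by $d\log\det g$, which leaves $F_{12}$ unchanged. Hence $F_{12}$ is a globally well-defined closed form on $U$, and the identity is frame-independent.
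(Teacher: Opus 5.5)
Your argument is correct and is essentially the paper's proof. Both apply the Leibniz rule to $A\wedge F_2^i\wedge F_{12}^{n-1-i}$ with $A=\theta^{12}$ or $\widetilde\theta^{12}$, using $dF_2=dF_{12}=0$, $dA=F_{12}$, and the fact that even-degree forms commute.

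One small caveat: your claim that $h$ is a holomorphic unit on $U$ does not match the paper's setting, since along $D_{\mathrm{reg}}$ one takes $h=uf$. So deriving $d\widetilde\theta^{12}=d\theta^{12}$ from $d(d\log h)=0$, and your alternative route via Lemma~\ref{lem:exact-difference}, are valid only on $U\setminus D$. The main computation does not need this, because $d\theta^{12}=d\widetilde\theta^{12}=F_{12}$ is already part of the definition in \eqref{eq:log-gauge}.
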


\begin{proof}
Let $A:= \theta^{12}$ or $\tilde{\theta}^{12}, B:=F_2$ and $C:=F_{12}.$ Then 
$$\psi_i = (2 \pi {   i})^{-n} A \wedge B^i \wedge C^{n-1-i},$$ and since $dB = dC = 0,$ we obtain 
$$d \psi_i = (2 \pi {   i})^{-n} dA \wedge B^i \wedge C^{n-1-i} = (2 \pi {   i})^{-n} C^{n-i} \wedge B^i.$$
The same computation applies to $\psi^{\log}_i.$
\end{proof}

\section{ Chern-Weil identification of the right-hand side}

Let \(F_{12}=d\tilde\theta^{12}\) be the curvature representative of
\(c_1(N^{\log}_{\cF/X})\) associated with the logarithmic Bott connection on the
locally free locus. Equivalently, it represents the first Chern class of the
line bundle \(\det N^{\log}_{\cF/X}\) and extends as a closed current/form on \(X\)
across subsets of codimension at least two.
Then,
\begin{equation}
    \Omega_i:= (2 \pi {   i})^{-n} F^{n-i}_{12} \wedge F_2^i
\end{equation}
represents 
$$c_1(N^{\log}_{\cF/X})^{n-i} D^i \in H^{2n}(X, \mathbb{C}), $$
and by Lemma \ref{lem:differentials} we have $d \psi_i^{\log} = \Omega_i$ on $U.$

\section{Links, local residues and Stokes}\label{sec:residues}\label{sec:global-formula}

For later use, set \(\Sigma_0:=\Sing(\cF)\). For \(i\geq 1\), let
\(\Sigma_i:=Z_D(\cF)\) denote the finite set of isolated zeroes of the vector field
induced by \(\cF\) on \(D_{\mathrm{reg}}\). When these zeroes coincide with
\(\Sing(\cF)\cap D_{\mathrm{reg}}\), we simply write the sums over
\(\Sing(\cF)\cap D\).

\subsection{Links $L_p$ and orientations}

Fix a Hermitian metric on \(X\). For each \(p\in\Sigma_i\), pick \(\epsilon_p>0\) such that the closed balls
\(B_p := \{x \in X: \mbox{dist}(x,p) \leq \epsilon_p\}\)
are disjoint and contained in charts where \(\cF\) is regular off \(p\) and, if
\(p\in D_{\mathrm{reg}}\), where \(D=\{f=0\}\). Define the link 
$$L_p:= \partial B_p \simeq S^{2n-1},$$
with boundary orientation (outward normal first). Then $L_p \subset U.$
Using any local potential $\beta$ with $d \beta = F_2,$ set

\begin{equation} \label{eq:local-log-residue}
    \widehat{\mbox{Res}}^{log}_{c_1^{n-i}, D^i}(\cF,D,p) = (2 \pi {   i})^{-n} \int_{L_p} \tilde{\theta}^{12} \wedge (d \beta)^i \wedge F_{12}^{n-1-i} := \int_{L_p} \psi_i^{\log}.
\end{equation}

\begin{equation} 
    \mbox{Res}_{c_1^{n-i}, D^i}(\cF,D,p) :=  \int_{L_p} \psi_i  .
\end{equation}

\begin{equation} 
    \Var_{c_1^{n-i},\,D^i}(\cF,D,p)
:=\int_{L_p}\phi_i.
\end{equation}

By Lemmas \ref{lem:pfaff-frame}, \ref{lem:JD-local-potential} and \ref{lem:exact-difference} this is independent of all auxiliary choices $(\eta, F_2, \beta, h).$

\subsection{Stokes on the complement and integrability}

Let $$M_i:=X \setminus \bigcup_{p \in \Sigma_i} \mbox{int}(B_p).$$
Then \(\partial M_i = \bigsqcup_{p\in\Sigma_i} L_p\). Since \(d \psi_i^{\log} = \Omega_i\) on the relevant locally free locus,
\begin{equation}
    \int_{M_i} \Omega_i = \int_{M_i} d \psi_i^{\log} = \int_{\partial M_i} \psi_i^{\log} = \sum_{p\in\Sigma_i} \int_{L_p} \psi_i^{\log}.
\end{equation}

\begin{lemma}\label{lem:integrability}
Let
$
M_i:=X\setminus\bigcup_{p\in\Sigma_i}\operatorname{int}(B_p).$
Then
\[
\lim_{\epsilon_p\to 0}
\left(
\int_X\Omega_i-\int_{M_i}\Omega_i
\right)
=0.
\]
Equivalently,
\[
\lim_{\epsilon_p\to 0}
\sum_{p\in\Sigma_i}\int_{B_p}\Omega_i=0.
\]
\end{lemma}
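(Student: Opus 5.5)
The plan is to treat \(\Omega_i\) as a globally defined smooth (or at least locally integrable) \(2n\)-form on \(X\) and use absolute continuity of the Lebesgue integral. By the Chern--Weil identification in the previous section, \(F_{12}\) represents \(c_1(\det N^{\log}_{\cF/X})\). Since \(\det N^{\log}_{\cF/X}\) is a genuine holomorphic line bundle on \(X\), we may choose a Hermitian metric on it. We then take \(F_{12}\) to be the associated Chern curvature form, which is a smooth closed \(2\)-form on all of \(X\). Away from \(\Sing(\cF)\cup\Sing D\), it differs from the Bott-type representative \(d\tilde\theta^{12}\) by \(d\) of a smooth global \(1\)-form. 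This choice is compatible with the constructions of \(\psi_i^{\log}\): by Lemma \ref{lem:pfaff-frame}, changing the frame changes \(\tilde\theta^{12}\) only by a \(d\log\det g\) term, so the trace connection can be taken to be the Chern connection of the metric wherever the frame is regular. Similarly, \(F_2\) is by construction a smooth closed form on \(X\) (Lemma \ref{lem:JD-local-potential}). Hence \(\Omega_i=(2\pi i)^{-n}F_{12}^{n-i}\wedge F_2^i\) is a smooth \(2n\)-form on the compact manifold \(X\), and in particular \(|\Omega_i|\le C\,dV\) for a constant \(C\) and a fixed volume form \(dV\).

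With this in hand, the estimate is immediate. For each \(p\in\Sigma_i\) we have
\[
\Bigl|\int_{B_p}\Omega_i\Bigr|\le C\,\mathrm{vol}(B_p)\le C'\epsilon_p^{2n},
\]
which tends to \(0\). Since \(\Sigma_i\) is finite (the singularities are isolated and \(Z_D(\cF)\) is finite by hypothesis), the sum over \(p\in\Sigma_i\) also tends to zero. Moreover, \(\int_X\Omega_i-\int_{M_i}\Omega_i=\sum_p\int_{B_p}\Omega_i\), because the balls are disjoint and their boundaries have measure zero. This gives both formulations in the statement.

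The main obstacle is justifying that the representative \(\Omega_i\) used in the Stokes identity is the same smooth global form, rather than a form defined only on \(U\) that might be singular near \(\Sing(\cF)\) or \(\Sing D\). The primary route is the one above: fix a smooth Chern connection on the line bundle \(\det N^{\log}_{\cF/X}\) from the start, and note that on the link \(L_p\subset U\) the transgression \(\psi_i^{\log}\) computed with the Bott trace form differs from the one computed with this smooth connection only by an exact term (again by Lemmas \ref{lem:pfaff-frame} and \ref{lem:JD-local-potential}).

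If one insists instead on using \(d\tilde\theta^{12}\) directly, the fallback is to show it is \(L^1_{\mathrm{loc}}\). Near an isolated zero of \(v\), the Bott connection form has a singularity of order \(|z|^{-1}\), so \(F_{12}\) is \(O(|z|^{-2})\). Then \(F_{12}^{n-i}\) is \(O(|z|^{-2(n-i)})\), and the integral over \(B_p\) is bounded by \(\int_0^{\epsilon}r^{2n-1-2(n-i)}\,dr\). For \(i\ge1\) this tends to zero. For \(i=0\) this bound is borderline, which is precisely why the smooth-representative argument is the one to use.
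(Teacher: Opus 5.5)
Your main argument proves the estimate for a different form than the one in the statement, and the bridge you build back to the paper's form does not hold.

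In the paper, $\Omega_i=(2\pi i)^{-n}F_{12}^{n-i}\wedge F_2^i$, where $F_{12}=d\widetilde\theta^{12}$ is the Bott-type curvature of the Pfaff data. This choice is forced by $d\psi_i^{\log}=\Omega_i$ (Lemma~\ref{lem:differentials}), which is exactly what the Stokes identity preceding the lemma uses. This form is in general singular at the zeros of $v$, in particular at the points of $\Sigma_i$.

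Write $F_C$ for the Chern curvature of a Hermitian metric on $\det N^{\log}_{\cF/X}$, and $\Omega_i^{C}$, $\psi_i^{C}$ for the forms built from it. Your bound $|\Omega_i^{C}|\le C\,dV$ is correct, but $\Omega_i^{C}$ no longer satisfies $d\psi_i^{\log}=\Omega_i^{C}$. Neither reason you give for the swap being harmless holds.

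First, Lemma~\ref{lem:pfaff-frame} only rewrites the same connection in another frame. The change $\widetilde\theta^{12}\mapsto\widetilde\theta^{12}+d\log\det g$ leaves $F_{12}$ unchanged, so no frame turns it into $F_C$. In fact, $d\eta_u=\sum_v\theta_{uv}\wedge\eta_v$ gives $d\omega_{12}=\theta^{12}\wedge\omega_{12}$. Differentiating, with $d\theta^{12}=d\widetilde\theta^{12}$, yields $F_{12}\wedge\omega_{12}=0$, a constraint a Chern curvature has no reason to satisfy.

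Second, $\psi_i^{\log}$ and $\psi_i^{C}$ do not differ on $L_p$ by an exact form. Take $i=0$ and write $\psi_0^{C}$ in a frame of $\det N^{\log}_{\cF/X}$ over all of $B_p$. Stokes and your own bound give $\int_{L_p}\psi_0^{C}=\int_{B_p}\Omega_0^{C}=O(\epsilon_p^{2n})$. Your claim would then force every $\widehat{\Res}^{\log}_{c_1^n}(\cF,D,p)$ to tend to $0$. This contradicts the $i=0$ identity of Theorem~\ref{thm:residues}; already the classical Baum--Bott theorem for $D=\emptyset$ on $\mathbb P^n$ gives total residue $(\deg\cF+n)^n\neq0$. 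In the limit, the integral of that difference over $L_p$ is the residue itself. Lemma~\ref{lem:JD-local-potential} concerns only the choice of $\beta$ and cannot remove it.

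The paper's proof keeps the actual $\Omega_i$. It argues that this form is locally integrable near $\Sigma_i$ and then uses absolute continuity of the integral. That integrability is the content of the lemma, and only your fallback addresses it, incompletely.

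The bound $|\widetilde\theta^{12}|=O(|z|^{-1})$ presupposes $|v|\gtrsim|z|$ near $p$, essentially a nondegenerate zero. For a general isolated zero only a \L{}ojasiewicz bound $|v|\ge c|z|^N$ is available, and your power count no longer closes.

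For $i=0$ you stop at the borderline case, but the missing observation is Bott vanishing. The identity $F_{12}\wedge\omega_{12}=0$ puts $F_{12}$ in the ideal generated by the $n-1$ one-forms $\eta_1,\ldots,\eta_{n-1}$. Hence $F_{12}^n\equiv0$ on the regular locus, so $\Omega_0$ vanishes there and that case is trivial. This also shows concretely that the substitution is not cosmetic: $\int_X\Omega_0^{C}=\int_X c_1(N^{\log}_{\cF/X})^n$, whereas the Bott-type $\Omega_0$ is identically zero off $\Sigma_0$.
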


\begin{proof}
It is enough to prove that \(\Omega_i\) is locally integrable near each point of
\(\Sigma_i\). On the locally free locus of the logarithmic normal sheaf, the form
\[
\Omega_i=(2\pi i)^{-n}F_{12}^{\,n-i}\wedge F_D^i
\]
is smooth away from \(D\). Along \(D_{\mathrm{reg}}\) it has at worst logarithmic
growth, because it is built from logarithmic connection forms and their curvature
forms. Hence \(\Omega_i\) defines a locally integrable top-degree form near the
points under consideration.
Therefore the integral of \(\Omega_i\) over a ball \(B_p\) tends to zero when the
radius of \(B_p\) tends to zero:
\[
\lim_{\epsilon_p\to0}\int_{B_p}\Omega_i=0.
\]
Since \(\Sigma_i\) is finite, summing over \(p\in\Sigma_i\) gives
\[
\lim_{\epsilon_p\to0}
\sum_{p\in\Sigma_i}\int_{B_p}\Omega_i=0.
\]
The result follows from
\[
X=M_i\cup\bigcup_{p\in\Sigma_i}B_p
\]
up to boundaries of measure zero.
\end{proof}

We are now in a position to prove the main result of the paper.

\begin{theorem}\label{thm:residues}
Let \(\cF\) be a one-dimensional holomorphic foliation on \(X\) with only
isolated singularities, logarithmic along \(D\). Let \(Z_D(\cF)\) denote the
finite set of isolated zeroes of the vector field induced by \(\cF\) on
\(D_{\mathrm{reg}}\). When these zeroes coincide with
\(\Sing(\cF)\cap D_{\mathrm{reg}}\), we write the sums simply over
\(\Sing(\cF)\cap D\).
For \(i=0\), one has the logarithmic formula
\[
\sum_{p\in \Sing(\cF)}
\widehat{\operatorname{Res}}^{\log}_{c_1^n}(\cF,D,p)
=
\int_X c_1\bigl(N^{\log}_{\cF/X}\bigr)^n,
\]
and the variational identity
\[
\sum_{p\in \Sing(\cF)}
\operatorname{Var}_{c_1^n}(\cF,D,p)
=
\int_X
\Bigl[
c_1(N_{\cF})^n
-
c_1\bigl(N^{\log}_{\cF/X}\bigr)^n
\Bigr].
\]
For every \(1\leq i\leq n-1\), one has
\[
\sum_{p\in Z_D(\cF)}
\widehat{\operatorname{Res}}^{\log}_{c_1^{n-i},\,D^i}
(\cF,D,p)
=
\int_X c_1\bigl(N^{\log}_{\cF/X}\bigr)^{n-i}D^i,
\]
and
\[
\sum_{p\in Z_D(\cF)}
\operatorname{Var}_{c_1^{n-i},\,D^i}
(\cF,D,p)
=
\int_X
\Bigl[
c_1(N_{\cF})^{n-i}
-
c_1\bigl(N^{\log}_{\cF/X}\bigr)^{n-i}
\Bigr]D^i.
\]
\end{theorem}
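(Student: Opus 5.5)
The plan is a Stokes argument on the complement of small balls, followed by a limiting argument and a cohomological identification of the difference term. Fix $i$ with $0\le i\le n-1$ and the corresponding finite set $\Sigma_i$: this is $\Sing(\cF)$ for $i=0$ and $Z_D(\cF)$ for $i\ge1$. For each $p\in\Sigma_i$ take balls $B_p$ of radius $\epsilon_p$, and set $M_i=X\setminus\bigcup_p\operatorname{int}B_p$, as in the construction of the links $L_p$.

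\textbf{Logarithmic formula.} By Lemma~\ref{lem:differentials}, $d\psi_i^{\log}=\Omega_i$ on the locus where the Pfaff data are defined. One must check that $\psi_i^{\log}$ is globally defined on $M_i$ up to exact terms. This follows from Lemma~\ref{lem:pfaff-frame}: on overlaps $\tilde\theta^{12}$ changes by $d\log\det g$, and $F_{12}$, $F_2$ are global closed forms. The resulting discrepancies are therefore exact, and a partition-of-unity (\v{C}ech--de Rham) gluing gives a global primitive on $M_i$ whose boundary integrals agree with those of $\psi_i^{\log}$.

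For $i\ge1$ there is an extra point. Away from $D$ one can choose $F_2$ supported in a tubular neighbourhood of $D$, as the Thom form of $\mathcal O_X(D)$ localised near $D$. Then $\Omega_i$ only sees a neighbourhood of $D$, and the relevant singular points are the zeroes of the induced field on $D_{\mathrm{reg}}$. This is the reason $Z_D(\cF)$ appears instead of $\Sing(\cF)$. Stokes on $M_i$ then gives
\[
\int_{M_i}\Omega_i=\sum_{p\in\Sigma_i}\int_{L_p}\psi_i^{\log}.
\]
Since $\Omega_i$ represents $c_1(N^{\log}_{\cF/X})^{n-i}D^i$ (Chern--Weil identification), Lemma~\ref{lem:integrability} lets us let $\epsilon_p\to0$. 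The left side tends to $\int_X c_1(N^{\log}_{\cF/X})^{n-i}D^i$. The right side is independent of $\epsilon_p$, because between two radii $\Omega_i$ integrates to something tending to zero. This yields the first and third formulas.

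\textbf{Variational identities.} Apply the same Stokes argument to $\psi_i$; Lemma~\ref{lem:differentials} gives $d\psi_i=\Omega_i$ as well. Subtracting, Lemma~\ref{lem:exact-difference} gives
\[
\sum_p\operatorname{Var}=\sum_p\int_{L_p}\phi_i,
\]
with $\phi_i$ closed on $M_i$. The key step is to identify this total with $\int_X\bigl[c_1(N_\cF)^{n-i}-c_1(N^{\log}_{\cF/X})^{n-i}\bigr]D^i$. To do this, use $c_1(N_\cF)=c_1(N^{\log}_{\cF/X})+D$ from the determinant of~\eqref{eq:normal-comparison}. The ordinary connection form is $\theta^{12}=\tilde\theta^{12}-d\log h$, where $d\log h$ has a logarithmic pole along $D$ with residue $1$, since $h=uf$. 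Regularising $d\log f$ by a smooth connection form of $\mathcal O_X(D)$, whose curvature represents $D$, turns $\theta^{12}$ into a genuine connection form for $\det N_\cF$. Its curvature is $F_{12}+F_D$, which produces the expansion of $(c_1(N^{\log})+D)^{n-i}-c_1(N^{\log})^{n-i}$. The difference between the regularised and singular forms is supported near $D$, and its contribution through $L_p\cap D$ is computed by the Poincar\'e--Leray residue along $D_{\mathrm{reg}}$. This matches the $d\log h$ term in $\phi_i$.

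\textbf{Main obstacle.} I expect the hardest part to be this last step: making precise that $\int_{L_p}\phi_i$ captures the full cohomological difference. The form $d\log h$ is not defined on $L_p\cap D$, so the links must be taken in $X\setminus D$, which requires a tube around $D$ together with a limiting argument.

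I would first try to avoid this by working in cohomology. Lemma~\ref{lem:exact-difference} shows $\phi_i$ is closed on $M_i\setminus D$. The tube integral over $\partial T_\delta(D)\cap M_i$ then equals $2\pi i$ times the integral over $D\cap M_i$ of $F_2^i\wedge F_{12}^{n-1-i}$, by the residue of $d\log f$. Combining this with the Stokes computations for $\psi_i$ and $\psi_i^{\log}$, so that everything reduces to the first two formulas applied to $N_\cF$ and $N^{\log}_{\cF/X}$, should give the variational identities directly as the difference of the two global formulas.
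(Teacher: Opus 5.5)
Your logarithmic part follows the paper's route: Stokes for $\psi_i^{\log}$ on $M_i$, then Lemma~\ref{lem:integrability}, then $\epsilon_p\to0$. The variational part, however, has a genuine gap. You run the Stokes argument for $\psi_i$ with $d\psi_i=\Omega_i$, the same form as for $\psi_i^{\log}$. So the entire difference must come from the pole of $d\log h$ along $D$, and that pole contributes exactly one factor of $D$. Your own tube computation shows this: integrating $d\log f$ over the circle fibres gives
\[
\sum_p\int_{L_p}\phi_i=\pm(2\pi i)^{1-n}\int_D F_2^{\,i}\wedge F_{12}^{\,n-1-i}.
\]
Reading $F_{12}$ and $F_2$ as representatives of $c_1(N^{\log}_{\cF/X})$ and $D$, as you do, this is $\pm\int_X c_1(N^{\log}_{\cF/X})^{n-1-i}D^{i+1}$. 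The right-hand side of the theorem is instead
\[
\int_X\Bigl[\bigl(c_1(N^{\log}_{\cF/X})+D\bigr)^{n-i}-c_1(N^{\log}_{\cF/X})^{n-i}\Bigr]D^i
=\sum_{k=1}^{n-i}\binom{n-i}{k}\int_X c_1(N^{\log}_{\cF/X})^{n-i-k}D^{i+k},
\]
and the two agree only when $i=n-1$. Take the paper's example on $\mathbb P^2$, with $c_1(N_\cF)=3H$, $c_1(N^{\log}_{\cF/X})=2H$, $D=H$, and $i=0$. The target is $3^2-2^2=5$, while the tube term is $c_1(N^{\log}_{\cF/X})\cdot D=2$. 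So your claim that the regularisation ``matches the $d\log h$ term in $\phi_i$'' is exactly where the argument fails. A primitive built from the regularised connection, with curvature $F_{12}+F_D$, contains the terms $F_D^{\,i+k}\wedge F_{12}^{\,n-1-i-k}$ ($k\ge1$) coming from $(F_{12}+F_D)^{n-1-i}$. The form $\phi_i=-(2\pi i)^{-n}d\log h\wedge F_2^{\,i}\wedge F_{12}^{\,n-1-i}$ never sees these terms.

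The missing step is the one the paper uses: prove the ordinary identity separately. Replace $\widetilde\theta^{12}$ and $F_{12}$ by an ordinary trace form and a curvature representative of $c_1(N_\cF)=c_1(N^{\log}_{\cF/X})+D$. The Stokes argument then yields $\int_X c_1(N_\cF)^{n-i}D^i$ rather than $\int_X\Omega_i$. The variational identities follow by subtracting the logarithmic ones, taking $\Var=\Res-\widehat{\Res}^{\log}$ as in the paper, with no analysis of the pole of $d\log h$ at all. Your closing sentence points in this direction, but it is incompatible with your choice $d\psi_i=\Omega_i$.

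Separately, the gluing remark in your logarithmic part is not right as stated. Local primitives that differ by exact forms on overlaps do not in general glue to a global primitive on $M_i$ with the same link integrals; the obstruction is exactly the characteristic class you are computing. Were it otherwise, Bott vanishing $F_{12}^{\,n}=0$ on the regular locus would force every residue sum with $i=0$ to vanish by Stokes on $M_0$. The standard mechanism is the transgression between a global smooth connection on $\det N^{\log}_{\cF/X}$ and the Bott connection on $M_i$. The local residue is then computed in a frame that extends over $B_p$.
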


\begin{proof}
We prove the logarithmic identities. The ordinary identities are obtained in
the same way by replacing the logarithmic normal sheaf by the ordinary normal
sheaf, and the variational identities follow by subtraction.
First consider \(i=0\). Let
\[
V_0=\bigcup_{p\in \Sing(\cF)}B_p
\]
be a union of pairwise disjoint sufficiently small balls centred at the isolated
singularities of \(\cF\), and put \(L_p=\partial B_p\). On the punctured ball
\(B_p^\ast=B_p\setminus\{p\}\), choose Pfaff data as in the preceding
construction:
\[
\omega_{12}=\eta_1\wedge\cdots\wedge\eta_{n-1},
\qquad
\omega_1=\omega_2\wedge\omega_{12}
=
h\,\eta_1\wedge\cdots\wedge\eta_{n-1},
\]
with \(\omega_1\) defining \(\cF\). The integrability relations give the trace
forms \(\theta^{12}\) and \(\widetilde\theta^{12}\), and we set
\[
F_{12}^{\log}:=d\widetilde\theta^{12}.
\]
The form
\[
\psi_0^{\log}
:=
(2\pi i)^{-n}
\widetilde\theta^{12}\wedge
\bigl(F_{12}^{\log}\bigr)^{n-1}
\]
satisfies
\[
d\psi_0^{\log}
=
(2\pi i)^{-n}
\bigl(F_{12}^{\log}\bigr)^n,
\]
which represents \(c_1(N^{\log}_{\cF/X})^n\). Since \(\cF\) has dimension one,
its codimension is \(n-1\), and Bott's vanishing theorem localizes this
degree-\(n\) characteristic class on \(\Sing(\cF)\). Therefore Stokes' theorem
on \(X\setminus V_0\), followed by the limit as the radii of the balls tend to
zero, gives
\[
\sum_{p\in \Sing(\cF)}
\widehat{\operatorname{Res}}^{\log}_{c_1^n}(\cF,D,p)
=
\sum_{p\in \Sing(\cF)}
\int_{L_p}\psi_0^{\log}
=
\int_X c_1\bigl(N^{\log}_{\cF/X}\bigr)^n.
\]

Now let \(1\leq i\leq n-1\). In this case the factor \(D^i\) makes the
localization relative to the divisor. On \(D_{\mathrm{reg}}\), the logarithmic
vector field induces a one-dimensional foliation of codimension \(n-2\). The
relative characteristic polynomial has degree \(n-1\), and Bott's vanishing
theorem on \(D_{\mathrm{reg}}\) localizes the corresponding relative class at
the isolated zeroes of the induced vector field, namely at \(Z_D(\cF)\).
Let
\[
V_D=\bigcup_{p\in Z_D(\cF)}B_p
\]
be a union of pairwise disjoint sufficiently small balls centred at these
points. On each punctured ball choose the same local Pfaff data as above and
also a local primitive \(\beta\) of the fixed closed representative \(F_D\) of
the class \(D\), so that \(d\beta=F_D\). Set
\[
\psi_i^{\log}
:=
(2\pi i)^{-n}
\widetilde\theta^{12}\wedge
F_D^i\wedge
\bigl(F_{12}^{\log}\bigr)^{n-i-1}.
\]
Since \(dF_D=dF_{12}^{\log}=0\), one has
\[
d\psi_i^{\log}
=
(2\pi i)^{-n}
F_D^i\wedge
\bigl(F_{12}^{\log}\bigr)^{n-i}.
\]
This form represents
\[
c_1\bigl(N_{\cF/X}^{\log}\bigr)^{n-i}D^i.
\]
By the relative Bott localization described above, this class is represented
by a current supported on \(\overline V_D\). Applying Stokes' theorem on
\(X\setminus V_D\) and then letting the radii of the balls tend to zero, we get
\[
\sum_{p\in Z_D(\cF)}
\widehat{\operatorname{Res}}^{\log}_{c_1^{n-i},\,D^i}(\cF,D,p)
=
\sum_{p\in Z_D(\cF)}
\int_{L_p}\psi_i^{\log}
=
\int_X
c_1\bigl(N_{\cF/X}^{\log}\bigr)^{n-i}D^i.
\]
The proof of the ordinary identities is identical, replacing
\(\widetilde\theta^{12}\) and \(F_{12}^{\log}\) by the corresponding ordinary
trace form and curvature representative of \(c_1(N_{\cF})\). Finally, by
definition,
\[
\operatorname{Var}_{c_1^{n-i},D^i}(\cF,D,p)
=
\operatorname{Res}_{c_1^{n-i},D^i}(\cF,D,p)
-
\widehat{\operatorname{Res}}^{\log}_{c_1^{n-i},D^i}(\cF,D,p).
\]
Thus the variational identities follow by subtracting the logarithmic
identities from the ordinary ones.
\end{proof}
 Using Lemma \ref{lem:integrability} and (\ref{eq:local-log-residue}) we get

\begin{corollary}
Under the assumptions of Theorem \ref{thm:residues}, we recover for $i=0,$

\begin{itemize}
\item[(a)] The Baum-Bott Theorem for logarithmic foliation in \cite[Theorem~1.2]{CoLoMa}. 
\item[(b)] The classical Baum-Bott Theorem, if $D= \emptyset$.
\end{itemize}
\end{corollary}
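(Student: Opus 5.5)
The statement to prove is the Corollary just above: for \(i=0\), Theorem~\ref{thm:residues} specialises to the logarithmic Baum--Bott theorem of \cite{CoLoMa} and, when \(D=\emptyset\), to the classical Baum--Bott theorem. The plan is to specialise the \(i=0\) case of Theorem~\ref{thm:residues} in two ways, using Lemma~\ref{lem:integrability} to justify the Stokes limit. We then identify the link integrals \eqref{eq:local-log-residue} with the local residues appearing in the cited theorems.

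\emph{Step 1: the \(i=0\) identity as a limit of Stokes' formula.} Take \(\Sigma_0=\Sing(\cF)\) and \(M_0=X\setminus\bigcup_p \operatorname{int}(B_p)\). Stokes' theorem gives \(\int_{M_0}\Omega_0=\sum_p\int_{L_p}\psi_0^{\log}\). By Lemma~\ref{lem:integrability}, \(\int_{M_0}\Omega_0\to\int_X\Omega_0\) as the radii shrink. The boundary integrals are independent of the radius: for two radii, the integrals over the two links differ by \(\int\Omega_0\) over the annulus between them, and that annulus integral tends to \(0\). They are also independent of the auxiliary choices, by Lemmas~\ref{lem:pfaff-frame}, \ref{lem:JD-local-potential} and \ref{lem:exact-difference}. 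Since \(\Omega_0\) represents \(c_1(N^{\log}_{\cF/X})^n\), this recovers the first identity of Theorem~\ref{thm:residues}.

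\emph{Step 2: part (a).} Next we match \(\int_{L_p}\psi_0^{\log}\) with the local residue in \cite[Theorem~1.2]{CoLoMa}. Near an isolated singularity \(p\) with \(D\) free at \(p\), write \(v\) in a Saito basis \(\delta_1,\dots,\delta_n\) as \(v=\sum b_j\delta_j\). The trace form \(\widetilde\theta^{12}\) built from the Pfaff data adapted to \(T_{\cF}\subset T_X(-\log D)\) is then, up to an exact term and \(d\log\) of a unit (Lemma~\ref{lem:pfaff-frame}), the Bott-connection trace for the logarithmic frame. The standard Baum--Bott computation, transgressing to the torus \(\{|b_j|=\epsilon\}\), turns \((2\pi i)^{-n}\int_{L_p}\widetilde\theta^{12}\wedge(d\widetilde\theta^{12})^{n-1}\) into the Grothendieck residue of \(\operatorname{tr}(J^{\log}v)^n\,/\,(b_1,\dots,b_n)\). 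This is the logarithmic residue of \cite{CoLoMa}, and the global equality of the two theorems then coincides.

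\emph{Step 3: part (b).} If \(D=\emptyset\), then \(T_X(-\log D)=T_X\), so \(N^{\log}_{\cF/X}=N_{\cF}\), \(h\) is a unit, and \(\widetilde\theta^{12}=\theta^{12}\) up to \(d\log\) of a unit. Hence \(\operatorname{Var}=0\), and the formula reads \(\sum_p\operatorname{Res}_{c_1^n}(\cF,p)=\int_X c_1(N_{\cF})^n\), with the local term computed as \(\operatorname{Res}_p\bigl[(\operatorname{tr}Jv)^n/(a_1,\dots,a_n)\bigr]\). This is the classical Baum--Bott theorem \cite{BB1} for \(\varphi=c_1^n\).

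The main obstacle is Step 2: identifying the link integral of the logarithmic trace form with the explicit residue of \cite{CoLoMa}. This needs a careful transgression argument near \(p\), not only the global equality of integrals, and at points where \(D\) is not free the Saito basis is unavailable. I would first treat free (e.g.\ normal crossing) points via the Saito frame. Elsewhere I would fall back on comparing the two global formulas, which share the same left-hand side \(\int_X c_1(N^{\log}_{\cF/X})^n\), so the totals agree even if the local matching is only established at free points.
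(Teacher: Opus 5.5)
Your Steps 1 and 3 reproduce the paper's own argument, which is one line: it specialises Theorem~\ref{thm:residues} to $i=0$, invokes Lemma~\ref{lem:integrability} and the definition \eqref{eq:local-log-residue} of the local residue as a link integral, and for $D=\emptyset$ notes that the logarithmic and ordinary data coincide. You are right that (b) only gives the $\varphi=c_1^n$ case of Baum--Bott. Step 1 is redundant, since the $i=0$ case of the theorem can simply be quoted.

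\textbf{The gap is in Step 2}, the one place where you go beyond the paper. The links $L_p$ surround points of $\Sing(\cF)$, i.e.\ zeros of $v$ as a section of $T_X$, not of $T_X(-\log D)$. The logarithmic trace form satisfies $\widetilde\theta^{12}(v)=\operatorname{tr}J^{\log}(v)=\operatorname{div}(v)-v(f)/f$, up to a term in $(a_1,\dots,a_n)$ that does not affect residues. This is a holomorphic function that typically does not vanish at $p$. A smooth such form on $B_p\setminus\{p\}$ is $\operatorname{tr}J^{\log}(v)\sum_j\bar a_j\,dz_j/\sum_j|a_j|^2$. The Baum--Bott transgression therefore lands on the torus $\{|a_j|=\epsilon\}$, not $\{|b_j|=\epsilon\}$, and gives
\[
\widehat{\Res}^{\log}_{c_1^n}(\cF,D,p)=\Res_p\left[\frac{\bigl(\operatorname{tr}J^{\log}(v)\bigr)^n\,dz_1\wedge\cdots\wedge dz_n}{a_1,\ldots,a_n}\right],
\]
with the \emph{ordinary} coefficients of $v$ in the denominator. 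To reach $\{|b_j|=\epsilon\}$ you would need the dual logarithmic coframe $\delta_j^\ast$, which has poles along $D$ (e.g.\ $df/f$). Stokes on $L_p$ then produces extra terms supported on $K_p=L_p\cap D$, so the transgression you invoke does not hold.

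The paper's own $\mathbb P^2$ example shows the discrepancy is real:
\begin{itemize}
\item At $[1{:}0{:}0]$ one has $v=5x\partial_x+4y\partial_y$ and $D=\{y=0\}$. The Saito coefficients with respect to $\partial_x,\,y\partial_y$ are $(5x,4)$, which have no common zero, so your formula gives $0$ there.
\item The paper's logarithmic contribution at that point is $81/20-56/20=5/4=\Res_0\bigl[25\,dx\wedge dy/(5x,4y)\bigr]$.
\item Summing $b$-residues over the logarithmic zeros gives only $-9/4$ (from $[0{:}0{:}1]$), not $c_1(N^{\log}_{\cF/X})^2=4$.
\end{itemize}
So the matching with \cite{CoLoMa} must use the $(a_1,\dots,a_n)$-residue above, summed over all of $\Sing(\cF)$. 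This includes points of $D$ where $v$ is a non-vanishing logarithmic field.

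Your fallback at non-free points, comparing the two global formulas, is circular: it uses the theorem you are trying to recover. It is also unnecessary. Up to sign convention, $\operatorname{div}(v)-v(f)/f$ is the coefficient of the Bott action of $v$ on $\det N^{\log}_{\cF/X}\simeq\mathcal O_X(-K_X-D)\otimes T_{\cF}^{\ast}$. That makes sense for any reduced $D$, so no Saito basis is needed.
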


\section{Applications to the Poincar\'e problem}\label{sec:poincare}

We begin with the surface case. The logarithmic residue formula recovers
Brunella's interpretation of the GSV index and, consequently, Carnicer's bound
for nondicritical invariant curves in \(\mathbb P^2\).

\begin{theorem}[Brunella \cite{brunella}]\label{thm:poincare-plane} Let $\cF$ be a one-dimensional holomorphic foliation on $X=\mathbb{P}^{2}$ logarithmic along $D$. If $D$ is a nondicritical separatrix at singular points $p$ of $\cF$, then

$$\deg D \leq \deg \cF +2.$$

\noindent Moreover, the equality holds if and only if $\cF$ is a generalized curve.
\end{theorem}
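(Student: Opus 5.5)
We apply Theorem~\ref{thm:residues} with $n=2$ and $i=1$. In this setting the relevant global number is
\[
\int_{\mathbb P^2} c_1\bigl(N^{\log}_{\cF/X}\bigr)\,D .
\]
The local residues $\widehat{\operatorname{Res}}^{\log}_{c_1,D}(\cF,D,p)$ at the points of $Z_D(\cF)=\Sing(\cF)\cap D$ will then be identified with Brunella's GSV-type indices. The non-negativity of these indices, and their vanishing characterisation, finally give the bound and the equality case.

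\textbf{Step 1: the global side.} We write $d=\deg\cF$ and $m=\deg D$. On $\mathbb P^2$ we have $T_\cF=\cO(1-d)$ and $c_1(T_{\mathbb P^2})=3H$, so $c_1(N_\cF)=(d+2)H$. The identity $c_1(N_\cF)=c_1(N^{\log}_{\cF/X})+D$ established before the Pfaff-system subsection then gives
\[
c_1\bigl(N^{\log}_{\cF/X}\bigr)=(d+2-m)H .
\]
Integrating against $D=mH$ yields
\[
\int c_1\bigl(N^{\log}_{\cF/X}\bigr)D=m(d+2-m).
\]
Theorem~\ref{thm:residues} therefore reads
\[
\sum_{p\in \Sing(\cF)\cap D}\widehat{\operatorname{Res}}^{\log}_{c_1,D}(\cF,D,p)=m(d+2-m).
\]
Since $m>0$, the inequality $m\le d+2$ is equivalent to non-negativity of the total residue, and equality is equivalent to its vanishing.

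\textbf{Step 2: local identification.} At a point $p\in D_{\mathrm{reg}}$ we use the Grothendieck-residue expression from the introduction with $n=2$, $i=1$:
\[
\widehat{\operatorname{Res}}^{\log}_{c_1,D}(\cF,D,p)=\operatorname{Res}_p\left[\frac{\operatorname{tr}J_D(v)\,dz_1}{\bar a_1}\right]_D .
\]
We then compare this with Brunella's formula
\[
\operatorname{Res}^{\log}=\mathrm{BB}-\mathrm{CS},\qquad \mathrm{GSV}=\mathrm{CS}'\text{-type difference},
\]
in the form of the known identity $N_\cF\cdot D - D^2=\sum_p \mathrm{GSV}(\cF,D,p)$. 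This identity holds since
\[
N_\cF\cdot D=\sum \mathrm{BB}\text{-relative}=\chi(D)+\sum\mathrm{GSV},\qquad D^2=\sum \mathrm{CS}.
\]
Concretely, we express $\widetilde\theta^{12}$ through $\theta^{12}+d\log h$ with $h=uf$, using the logarithmic gauge \eqref{eq:log-gauge}. Evaluating $\int_{L_p}\phi_1$ then gives the Camacho--Sad index of $D$ at $p$, because $d\log f\wedge F_{12}$ over the link computes the residue $\operatorname{Res}_p(k/\bar a_1)$. Similarly, $\int_{L_p}\psi_1$ yields the Baum--Bott-type term, namely the Gómez-Mont--Seade--Verjovsky variation of $N_\cF$ along $D$. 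Their difference is the GSV index $\mathrm{GSV}(\cF,D,p)$. At singular points of $D$ we instead argue on the normalisation, or via the relative residue theory of \cite{BrSeSu}, to express the residue as $\mathrm{GSV}(\cF,D,p)$ summed over branches.

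\textbf{Step 3: positivity.} We invoke Brunella's theorem that for a nondicritical separatrix $D$, written $D=\{f=0\}$, the index $\mathrm{GSV}(\cF,D,p)$ is non-negative. Moreover, it vanishes at every $p$ if and only if $\cF$ is a generalized curve at $p$, that is, no saddle-nodes appear in its reduction. Brunella proves this by comparing $\mathrm{GSV}$ with the multiplicities along the resolution, and we use it as a black box. Combined with Step 1, this gives $m(d+2-m)\ge 0$, hence $m\le d+2$. Equality holds iff all local residues vanish, i.e.\ iff $\cF$ is a generalized curve along $D$; since every singularity lies on $D$ in the relevant case, this matches the statement.

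The main obstacle is Step 2, identifying the excess logarithmic residue with the GSV index at singular points of $D$. There the adapted-coordinate formula is unavailable and one must pass to the relative residue on a singular curve. I would try first to reduce to branches via a local normalisation and additivity of the link integrals.
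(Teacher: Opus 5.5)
Your skeleton is the paper's. Theorem~\ref{thm:residues} with $n=2$, $i=1$ gives $\sum_p\widehat{\operatorname{Res}}^{\log}_{c_1,D}(\cF,D,p)=m(d+2-m)$. This total is matched with $\sum_p\operatorname{GSV}(\cF,D,p)$, and non-negativity of GSV at nondicritical points gives $m\le d+2$.

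You diverge in Step~2, and there you make the problem harder than it is. The paper only equates the \emph{sums}. Brunella's global formula $\sum_p\operatorname{GSV}(\cF,D,p)=(c_1(N_\cF)-D)\cdot D$ equals $m(d+2-m)$ on $\mathbb P^2$, so by your Step~1 it equals the total logarithmic residue. You already quote this identity, so no pointwise identification is needed and your ``main obstacle'' at singular points of $D$ disappears.

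Your justification of that identity is wrong, however. $N_\cF\cdot D=\chi(D)+\sum\operatorname{GSV}$ is false; the correct relations are $N_\cF\cdot D=D^2+\sum\operatorname{GSV}=\sum(\operatorname{GSV}+\operatorname{CS})$. The version involving $\chi(D)$ is $K_\cF\cdot D=\sum\operatorname{GSV}-\chi(D)$.

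The branchwise reduction you propose would also give wrong values, because GSV is not additive over branches: $\operatorname{GSV}(\cF,C_1\cup C_2,p)=\operatorname{GSV}(\cF,C_1,p)+\operatorname{GSV}(\cF,C_2,p)-2(C_1\cdot C_2)_p$. Moreover, $T_X(-\log D)$ at a singular point is not assembled from the branches. Take $v=x\partial_x+\lambda y\partial_y$ with $\lambda\notin\mathbb Q_{\ge0}$ and $D=\{xy=0\}$. Each branch has GSV index $1$, but $\operatorname{GSV}(\cF,D,0)=0$. Consistently, $v$ is a nowhere-vanishing section of $T_X(-\log D)=\langle x\partial_x,y\partial_y\rangle$ there.

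At smooth points of $D$ your local formula is correct, and simpler than the detour through $\phi_1$ and $\psi_1$: $\operatorname{Res}_p[\bar a_1'\,dz_1/\bar a_1]$ is the vanishing order of $v|_D$, which is the GSV index of a smooth invariant curve.

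For the equality clause, your claim that every singularity lies on $D$ is unjustified and unnecessary; read ``generalized curve'' as ``along $D$'', as the paper does. More substantively, the implication ``generalized curve at $p$ $\Rightarrow$ $\operatorname{GSV}(\cF,D,p)=0$'' requires $D$ to contain all separatrices through $p$. In the example above, $D=\{y=0\}$ has GSV index $1$ although the singularity is a generalized curve. So ``generalized curve $\Rightarrow$ equality'' fails without that extra hypothesis. Take a generic linear foliation of degree one and one of its invariant lines: all singularities are reduced, nondegenerate and nondicritical, yet $\deg D=1<3$. The paper's appeal to Cavalier--Lehmann passes over the same point, so this is a defect of the statement as formulated rather than of your strategy. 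The inequality itself goes through once Step~2 is replaced by the global identity.
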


\begin{proof}
Indeed, applying Theorem~\ref{thm:residues} \textup{(a)} to \(X=\mathbb P^2\) and
taking \(i=1\), we obtain
\[
\begin{aligned}
\sum_{p \in \Sing(\cF)\cap D}
\widehat{\operatorname{Res}}^{\log}_{c_1,\,D}(\cF,D,p)
&=
\int_D c_1\bigl(N^{\log}_{\cF/X}\bigr)\,c_1\bigl(J_D(D)\bigr) \\
&=
(\deg \cF+2-\deg D)\deg D.
\end{aligned}
\]
We now use the short exact sequence
\[
\begin{tikzcd}
0 \arrow[r] &
N_{\cF/X}^{\log} \arrow[r] &
N_{\cF} \arrow[r] &
J_D(D) \arrow[r] &
0 .
\end{tikzcd}
\]
It gives
$
c_1\bigl(N_{\cF/X}^{\log}\bigr)
=
c_1\bigl(N_{\cF}\bigr)-c_1\bigl(J_D(D)\bigr).
$
Since, for brevity, we write \(D=c_1\bigl(J_D(D)\bigr)\), this becomes
$
c_1\bigl(N_{\cF/X}^{\log}\bigr)
=
c_1\bigl(N_{\cF}\bigr)-D.
$
Intersecting with \(D\), we get
$
c_1\bigl(N_{\cF/X}^{\log}\bigr)D
=
\bigl(c_1(N_{\cF})-D\bigr)D.
$
By \cite[Proposition~4, p.~532]{brunella}, the logarithmic residue along the
invariant curve is identified with the GSV index:
\[
\sum_{p \in \Sing(\cF)\cap D}
\widehat{\operatorname{Res}}^{\log}_{c_1,\,D}(\cF,D,p)
=
\sum_{p \in \Sing(\cF)\cap D}
\operatorname{GSV}(\cF,D,p).
\]
Since \(D\) is a nondicritical separatrix, one has
$
\operatorname{GSV}(\cF,D,p)\geq 0
$
for every \(p\in \Sing(\cF)\cap D\). The desired inequality therefore follows.
Finally, the equality case is characterised by the vanishing of all the local
GSV indices. By \cite[Th\'eor\`eme~3.3]{CaLeh}, this is equivalent to saying 
that the singularities of \(\cF\) along \(D\) are generalized curves.
\end{proof}

We observe that Theorem \ref{thm:poincare-plane} recoverss Carnicer's result in \cite[Theorem p. 289]{Carnicer}. Now, let $X$ be a complex compact surface, and $D \subset X$ a reduced curve, by Brunella \cite{brunella} we have
$$\sum_{p \in \Sing(\cF)\cap D} GSV(\cF, D,p) =(c_1(N_{\cF}) - D)D.$$
and
$$\sum_{p \in \Sing(\cF)\cap D} CS(\cF, D,p) =D^{2}.$$
Moreover, $\Var(\cF,D,p) = GSV(\cF,D,p)+CS(\cF,D,p). $
Then Theorem \ref{thm:residues}\textup{(b)}, for $n=2$ and $i=1$, gives
$$\displaystyle \sum_{p \in \Sing(\cF)\cap D}{\mbox{Res}}_{c_1, D}(\cF,D,p)= \displaystyle \sum_{p \in \Sing(\cF)\cap D}\Var(\cF,D,p).$$
Similarly, Theorem \ref{thm:residues}\textup{(c)}, for $n=2$ and $i=1$, gives
$$
\begin{array}{ccl}
\displaystyle \sum_{p \in \Sing(\cF)\cap D}\Var_{c_1,\,D}(\cF,D,p)&=& \Big(c_{1}(N_{\cF}) - c_{1}(N_{\cF/X}^{\log})\Big)D \\
&=& \Big(c_{1}(N_{\cF}) - c_{1}(N_{\cF}) + D\Big) . D \\ \\
&=& D^{2} \\ \\
&=& \displaystyle \sum_{p \in \Sing(\cF)\cap D} CS(\cF, D,p).
\end{array}
$$

Finally, the following result shows that the non-negativity of the total logarithmic residue yields a positive answer to the Poincar\'e problem.
\begin{corollary}
Let \(X=\mathbb P^n\), let \(\deg \cF=d\), and let
\(D\subset\mathbb P^n\) be an invariant hypersurface of degree \(m\). Set
\[
i=
\begin{cases}
0, & \text{if } n \text{ is odd},\\
1, & \text{if } n \text{ is even}.
\end{cases}
\]
Assume that the total logarithmic residue is non-negative, namely
\[
\sum_p
\widehat{\operatorname{Res}}^{\log}_{c_1^{n-i},\,D^i}
(\cF,D,p)\geq 0.
\]
Then
$
m\leq d+n.
$
\end{corollary}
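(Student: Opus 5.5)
The plan is to evaluate the right-hand side of the global residue formula, Theorem~\ref{thm:residues}, explicitly on \(\mathbb P^n\), and then read off the sign using the parity choice of \(i\). Let \(H\) be the hyperplane class, so \(\int_{\mathbb P^n}H^n=1\) and \(D=mH\).

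First I compute \(c_1(N_{\cF})\). A foliation of degree \(d\) is given by a section of \(T_{\mathbb P^n}\otimes\cO_{\mathbb P^n}(d-1)\), so \(T_{\cF}\simeq\cO_{\mathbb P^n}(1-d)\). The exact sequence \(0\to T_{\cF}\to T_{\mathbb P^n}\to N_{\cF}\to 0\) then gives
\[
c_1(N_{\cF})=(n+1)H-(1-d)H=(n+d)H .
\]
Because \(D\) is invariant, \(\cF\) is logarithmic along \(D\), and we keep the standing saturation assumption. The first-Chern-class identity derived from \eqref{eq:normal-comparison}, namely \(c_1(N_{\cF})=c_1(N^{\log}_{\cF/X})+D\), then yields
\[
c_1\bigl(N^{\log}_{\cF/X}\bigr)=(n+d-m)H .
\]

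Next I apply Theorem~\ref{thm:residues} with the chosen \(i\). For \(n\) odd I take \(i=0\) and sum over \(\Sing(\cF)\). For \(n\) even I take \(i=1\) and sum over \(Z_D(\cF)\). In both cases the formula gives
\[
\sum_p \widehat{\operatorname{Res}}^{\log}_{c_1^{n-i},\,D^i}(\cF,D,p)
=\int_{\mathbb P^n}(n+d-m)^{n-i}m^i H^n=(n+d-m)^{n-i}m^i .
\]
By hypothesis this quantity is \(\geq 0\). The choice of \(i\) makes \(n-i\) odd in both cases, and \(m^i>0\) since \(m\geq 1\). Odd powers preserve sign, so \(n+d-m\geq 0\), that is, \(m\leq d+n\).

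The computation itself is routine. I expect the only delicate point to be checking that Theorem~\ref{thm:residues} actually applies, and I would address it as follows:
\begin{itemize}
\item for \(i=1\), the zeroes \(Z_D(\cF)\) of the induced vector field on \(D_{\mathrm{reg}}\) must be isolated, which I will take as implicit in the hypothesis that the total residue is defined;
\item the identity \(c_1(J_D(D))=D\) must be used, which holds because \(J_D(D)\) is a torsion-free rank-one \(\cO_D\)-sheaf supported on \(D\), so its determinant as an \(\cO_X\)-sheaf is \(\cO(D)\).
\end{itemize}
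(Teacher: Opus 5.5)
Your proposal is correct and is essentially the paper's proof. Both arguments identify \(c_1(N^{\log}_{\cF/X})=(d+n-m)H\); you derive this from \(c_1(N_{\cF})=(n+d)H\) and \(c_1(N_{\cF})=c_1(N^{\log}_{\cF/X})+D\), while the paper simply states it. Both then apply Theorem~\ref{thm:residues} to get \((d+n-m)^{n-i}m^i\geq 0\), and conclude \(m\leq d+n\) because \(n-i\) is odd and \(m>0\).
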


\begin{proof}
Let \(H\) denote the hyperplane class. Since
\[
c_1\bigl(N^{\log}_{\cF/X}\bigr)=(d+n-m)H,
\qquad
c_1\bigl(J_D(D)\bigr)=mH,
\]
Theorem~\ref{thm:residues}\textup{(a)} gives
\[
\sum_{p\in\Sigma_i}
\widehat{\operatorname{Res}}^{\log}_{c_1^{n-i},\,D^i}
(\cF,D,p)
=
(d+n-m)^{n-i}m^i.
\]
By the choice of \(i\), the integer \(n-i\) is odd. The left-hand side is the
Chern number \(\int_{\mathbb P^n}c_1(N^{\log}_{\cF/X})^{n-i}D^i\), hence an
integer; by assumption it is non-negative. As \(m>0\), it follows that
\(d+n-m\geq 0\), hence
$
m\leq d+n.
$
\end{proof}

\section{Determination and computation of logarithmic residues}

We now give local expressions for the residues introduced above, with special
attention to the variational correction term.

\begin{definition} \label{def:var-residue}
For \(0\leq i\leq n-1\) and \(p\in \Sing(\cF)\cap D\), define
\[
\Var_{c_1^{n-i},\,D^i}(\cF,D,p)
:=
\int_{L_p}\phi_i,
\]
where \(\phi_i\) is the exact-difference form of
Lemma~\ref{lem:exact-difference}.
\end{definition}

\begin{proposition}\label{prop:var-circle}
Fix \(p\in \Sing(\cF)\cap D\), and keep the local data of the preceding
sections. Let \(f\) be a reduced local equation of \(D\) on a chart \(U\)
containing \(L_p\), and choose a local potential \(\beta\) such that
\(d\beta=F_D\), where \(F_D\) represents \(D\). Set \(K_p:=L_p\cap D\). On
\(U\setminus D\), define
\[
\alpha_i
:=
\frac{1}{n-1}\Bigl(
(n-1-i)\,\widetilde{\theta}^{12}\wedge F_D^i\wedge F_{12}^{n-2-i}
+
i\,\beta\wedge F_D^{i-1}\wedge F_{12}^{n-1-i}
\Bigr),
\]
with the convention that the second summand is omitted if \(i=0\), and the
first summand is omitted if \(i=n-1\). Then:
\begin{enumerate}
\item[\textup{(a)}] On \(U\setminus D\) one has
\[
d\alpha_i=F_D^i\wedge F_{12}^{n-1-i},
\]
and therefore
\[
\phi_i=(2\pi i)^{-n}\,d\bigl(d\log f\wedge \alpha_i\bigr).
\]

\item[\textup{(b)}] Let \(T_\varepsilon\) be a sufficiently small closed tubular
neighbourhood of \(K_p\) in \(L_p\), and put
\[
L_{p,\varepsilon}:=L_p\setminus \operatorname{Int}(T_\varepsilon).
\]
Then the limit \(\lim_{\varepsilon\to 0}\int_{L_{p,\varepsilon}}\phi_i\)
exists, and
\[
\Var_{c_1^{n-i},\,D^i}(\cF,D,p)
=
(2\pi i)^{-n+1}\int_{K_p}\alpha_i,
\]
where \(K_p=\partial(D\cap B_p)\) is endowed with the induced boundary
orientation.
\end{enumerate}
In particular, if \(n=2\) and \(i=0\), then
\(\alpha_0=\widetilde{\theta}^{12}\) and
\[
\Var_{c_1^2,\,D^0}(\cF,D,p)
=
\frac{1}{2\pi i}\int_{\partial(D\cap B_p)}
\widetilde{\theta}^{12}.
\]
\end{proposition}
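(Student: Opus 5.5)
The plan is to prove (a) by a direct Chern--Weil computation. Part (b) then follows from Stokes' theorem on the link minus a tube around \(K_p\), together with a fibre integration of \(d\log f\) over the meridian circles of \(D\) inside \(L_p\).

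For (a), I use \(d\widetilde\theta^{12}=F_{12}\), \(d\beta=F_D\), and the fact that \(F_D,F_{12}\) are closed \(2\)-forms, so they commute with everything. Differentiating the first summand of \(\alpha_i\) gives \((n-1-i)\,F_{12}\wedge F_D^i\wedge F_{12}^{n-2-i}=(n-1-i)F_D^i\wedge F_{12}^{n-1-i}\). Differentiating the second gives \(i\,F_D\wedge F_D^{i-1}\wedge F_{12}^{n-1-i}=i\,F_D^i\wedge F_{12}^{n-1-i}\). Adding and dividing by \(n-1\) yields \(d\alpha_i=F_D^i\wedge F_{12}^{n-1-i}\). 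The boundary cases \(i=0\) and \(i=n-1\) are the same computation with one summand absent.

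Next, since \(d\log f\) is closed on \(U\setminus D\), we have \(d(d\log f\wedge\alpha_i)=-d\log f\wedge d\alpha_i=-d\log f\wedge F_D^i\wedge F_{12}^{n-1-i}\). By Lemma~\ref{lem:exact-difference}, \(\phi_i=-(2\pi i)^{-n}d\log h\wedge F_D^i\wedge F_{12}^{n-1-i}\). In the adapted chart \(h=uf\) with \(u\) a unit, and the extra term \(d\log u\wedge F_D^i\wedge F_{12}^{n-1-i}=d(\log u\,F_D^i\wedge F_{12}^{n-1-i})\) is exact on \(U\) (shrinking so that \(\log u\) is single-valued). So, as the paper already allows via the independence of \(h\), we may take \(h=f\), and then \(\phi_i=(2\pi i)^{-n}d(d\log f\wedge\alpha_i)\) on \(U\setminus D\). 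The exact \(d\log u\)-term integrates to zero over the closed manifold \(L_p\), so it does not affect \(\Var\).

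For (b), I apply Stokes on \(L_{p,\varepsilon}\), whose boundary is \(-\partial T_\varepsilon\) with the boundary orientation. This gives
\[
\int_{L_{p,\varepsilon}}\phi_i=-(2\pi i)^{-n}\int_{\partial T_\varepsilon}d\log f\wedge\alpha_i .
\]
For small \(\varepsilon\), \(\partial T_\varepsilon\) is a circle bundle over \(K_p\): near a point of \(K_p\subset D_{\mathrm{reg}}\setminus\{p\}\) we can use coordinates with \(f=z_n\), and the fibre is the meridian \(|z_n|=\varepsilon\). The key point is that \(\alpha_i\) extends smoothly across \(D\) near \(K_p\). The form \(\beta\) is smooth, \(F_{12}\) is the curvature of the logarithmic determinant, and \(\widetilde\theta^{12}\), being built from the logarithmic frame \(\widetilde\eta_1=h\eta_1,\eta_2,\dots\), is regular along \(D_{\mathrm{reg}}\) away from \(\Sing(\cF)\). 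Granting this, fibre integration gives \(\int_{|z_n|=\varepsilon}d\log z_n=2\pi i\), while the terms of \(\alpha_i\) containing \(dz_n,d\bar z_n\) or fibre-dependent coefficients contribute \(O(\varepsilon)\). Hence the limit exists and equals \((2\pi i)^{-n+1}\int_{K_p}\alpha_i\), with the sign absorbed by orienting \(K_p=\partial(D\cap B_p)\) compatibly with the complex orientation of the normal disc. Since \(\phi_i\) extends integrably (only logarithmic growth along \(D\), as in Lemma~\ref{lem:integrability}), \(\lim_{\varepsilon\to0}\int_{L_{p,\varepsilon}}\phi_i=\int_{L_p}\phi_i=\Var_{c_1^{n-i},D^i}(\cF,D,p)\). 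The special case \(n=2\), \(i=0\) is then read off directly, since there \(\alpha_0=\widetilde\theta^{12}\).

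The main obstacle is the regularity claim: that \(\widetilde\theta^{12}\), and hence \(\alpha_i\), is smooth up to \(D\) along \(K_p\), so that only the \(d\log f\) pole contributes. My first attempt is to write \(\widetilde\theta^{12}=\theta^{12}+d\log h\) and check in the adapted chart \(v=\sum a_j\partial_{z_j}+kf\partial_f\) that the \(d\log f\) singularity of \(\theta^{12}\) is exactly cancelled. The second delicate point is tracking the orientation sign of \(\partial T_\varepsilon\) against that of \(K_p\).
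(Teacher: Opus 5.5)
Your argument follows the paper's proof step for step: the weighted computation of $d\alpha_i$ for (a), then Stokes on $L_{p,\varepsilon}$ plus integration of $d\log f$ over meridian circles for (b). Your handling of $h=uf$ versus $f$ is more careful than the paper's.

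\textbf{The gap: the final sign in (b).} You correctly obtain
\[
\int_{L_{p,\varepsilon}}\phi_i=-(2\pi i)^{-n}\int_{\partial T_\varepsilon}d\log f\wedge\alpha_i,
\]
with $\partial T_\varepsilon$ oriented as the boundary of $T_\varepsilon$. You then declare the minus sign \emph{absorbed} by the orientation of $K_p$. There is no such freedom. The statement fixes the outward-normal-first orientation on $L_p$, the boundary orientation on $K_p=\partial(D\cap B_p)$, and the complex orientation on the normal disc.

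At $q\in K_p$, take the outward normal $\nu$ inside $T_qD$ and a complex line $N_q\subset T_qL_p$ transverse to $D$. Then $(\nu,T_qK_p,N_q)$ is a positive frame of $T_qX$. So near $K_p$ the link is oriented as $K_p\times\mathbb D$, and $\partial T_\varepsilon=S^1\times K_p$ with positively oriented fibres. Hence $\int_{\partial T_\varepsilon}d\log f\wedge\alpha_i\to +2\pi i\int_{K_p}\alpha_i$, and what your argument actually yields is
\[
\Var_{c_1^{n-i},\,D^i}(\cF,D,p)=-(2\pi i)^{1-n}\int_{K_p}\alpha_i .
\]
You can check this on the model $n=2$, $f=z_2$, $L_p=S^3$, $\alpha=\bar z_1\,dz_1$. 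With $z_1=\cos\rho\,e^{i\phi_1}$ and $z_2=\sin\rho\,e^{i\phi_2}$ one computes $\int_{S^3}\frac{dz_2}{z_2}\wedge d\bar z_1\wedge dz_1=-4\pi^2=2\pi i\int_{K_p}\alpha$. So $\int_{L_p}d\log f\wedge d\alpha=2\pi i\int_{K_p}\alpha$, whereas $\phi_i=-(2\pi i)^{-n}\,d\log f\wedge d\alpha_i$.

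The paper reaches the printed sign only because it writes Stokes on $L_{p,\varepsilon}$ without the minus sign while still integrating over positively oriented loops. That tacitly gives $K_p$ the orientation opposite to $\partial(D\cap B_p)$. So neither argument proves (b), or the case $n=2$, $i=0$, with the sign and orientation as printed. What your computation does prove is the identity with the opposite sign (equivalently, with $K_p$ oppositely oriented). You should state that, rather than absorb the sign.

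\textbf{A second unproved input.} You grant, and the paper uses silently, that $\widetilde\theta^{12}$ (hence $\alpha_i$ and $F_{12}$) is smooth across $D$ near $K_p$. This does not follow from the structure equations, which fix the trace form only modulo the span of the $\eta_u$. Moreover, the gauge relation $\theta^{12}=\widetilde\theta^{12}-d\log h$ forces at least one of the two trace forms to carry a $d\log f$ pole. So this is a normalisation of the Pfaff data that must be imposed explicitly. If the pole sits in $\widetilde\theta^{12}$, the product $d\log f\wedge\alpha_i$ is still harmless because $d\log f\wedge d\log f=0$, but $\int_{K_p}\alpha_i$ is then undefined.
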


\begin{proof}
By Lemma~\ref{lem:differentials}, together with \(dF_D=dF_{12}=0\), we have
\[
d\bigl(\widetilde{\theta}^{12}\wedge F_D^i\wedge F_{12}^{n-2-i}\bigr)
=
F_D^i\wedge F_{12}^{n-1-i},
\]
and similarly
\[
d\bigl(\beta\wedge F_D^{i-1}\wedge F_{12}^{n-1-i}\bigr)
=
F_D^i\wedge F_{12}^{n-1-i}.
\]
Since the coefficients in the definition of \(\alpha_i\) add up to one, it
follows that \(d\alpha_i=F_D^i\wedge F_{12}^{n-1-i}\). Moreover, since
\(d(d\log f)=0\) on \(U\setminus D\),
\[
d\bigl(d\log f\wedge\alpha_i\bigr)
=
-\,d\log f\wedge d\alpha_i
=
-\,d\log f\wedge F_D^i\wedge F_{12}^{n-1-i}.
\]
By Lemma~\ref{lem:exact-difference}, this is the exact-difference representative
giving \(\phi_i\), with the stated normalisation. This proves \textup{(a)}.

For \textup{(b)}, apply Stokes' theorem on \(L_{p,\varepsilon}\). With the
above orientation conventions,
\[
\int_{L_{p,\varepsilon}}\phi_i
=
(2\pi i)^{-n}
\int_{\partial T_\varepsilon}d\log f\wedge\alpha_i.
\]
On \(\partial T_\varepsilon\), the map \(f/|f|\) gives the circle fibration by
positively oriented small loops around \(D\), and
$
\int_{S^1}d\log f=2\pi i
$
on each fibre. Hence
\[
\int_{\partial T_\varepsilon}d\log f\wedge\alpha_i
=
(2\pi i)\int_{K_p}\alpha_i.
\]
Thus
\[
\int_{L_{p,\varepsilon}}\phi_i
=
(2\pi i)^{-n+1}\int_{K_p}\alpha_i.
\]
Letting \(\varepsilon\to 0\) and using Definition~\ref{def:var-residue} gives
the claim.
\end{proof}

\begin{corollary} \label{cor:global-var}
In the setting of Theorem~\ref{thm:residues}, for every \(0\leq i\leq n-1\)
one has
\[
\sum_{p\in \Sigma_i}
\Var_{c_1^{n-i},\,D^i}(\cF,D,p)
=
\int_X
\Bigl[
c_1(N_{\cF})^{n-i}
-
c_1\bigl(N^{\log}_{\cF/X}\bigr)^{n-i}
\Bigr]D^i.
\]
In particular, for \(i=0\),
\[
\sum_{p\in \Sing(\cF)}
\Var_{c_1^n,\,D^0}(\cF,D,p)
=
\int_X
\Bigl[
c_1(N_{\cF})^n
-
c_1\bigl(N^{\log}_{\cF/X}\bigr)^n
\Bigr].
\]
\end{corollary}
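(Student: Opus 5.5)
The plan is to obtain Corollary~\ref{cor:global-var} directly from Theorem~\ref{thm:residues}, using the definition of the variational residue as the link integral of \(\phi_i=\psi_i-\psi_i^{\log}\) (Definition~\ref{def:var-residue} and Lemma~\ref{lem:exact-difference}).

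The first step is to split each \(\Var\) into two link integrals. For each \(p\in\Sigma_i\), linearity of integration over the link \(L_p\subset U\) gives
\[
\Var_{c_1^{n-i},D^i}(\cF,D,p)=\int_{L_p}\psi_i-\int_{L_p}\psi_i^{\log}
=\operatorname{Res}_{c_1^{n-i},D^i}(\cF,D,p)-\widehat{\operatorname{Res}}^{\log}_{c_1^{n-i},D^i}(\cF,D,p).
\]
Lemmas~\ref{lem:pfaff-frame}, \ref{lem:JD-local-potential} and \ref{lem:exact-difference} show that each term is independent of the auxiliary data \((\eta,F_D,\beta,h)\). We can therefore use the same Pfaff data for the ordinary and the logarithmic terms.

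The second step is to sum over \(p\in\Sigma_i\), with \(\Sigma_0=\Sing(\cF)\) and \(\Sigma_i=Z_D(\cF)\) for \(i\geq1\). The logarithmic sum is \(\int_X c_1(N^{\log}_{\cF/X})^{n-i}D^i\) by Theorem~\ref{thm:residues}. The ordinary sum is \(\int_X c_1(N_{\cF})^{n-i}D^i\), by the same Stokes argument on \(M_i\) with \(\theta^{12}\) in place of \(\widetilde\theta^{12}\), as stated in the proof of that theorem. Subtracting the two gives the claimed identity, and the case \(i=0\) is the same identity with \(D^0=1\) and \(\Sigma_0=\Sing(\cF)\).

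The main point to check is that the ordinary transgression \(\psi_i\) has the same differential as \(\psi_i^{\log}\). This holds by Lemma~\ref{lem:differentials}, since \(d\theta^{12}=d\widetilde\theta^{12}=F_{12}\) by \eqref{eq:log-gauge}. On the other hand, \(c_1(N_\cF)\neq c_1(N^{\log}_{\cF/X})\): the two classes differ by \(D\). So I would not argue with a common closed form \(\Omega_i\). I would instead pass through Stokes on \(M_i\), where \(\phi_i=-(2\pi i)^{-n}d\log h\wedge F_D^i\wedge F_{12}^{n-1-i}\) is closed away from \(D\).

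To account for the discrepancy, I would use Proposition~\ref{prop:var-circle}: \(\phi_i\) is exact on \(U\setminus D\), and its global integral over \(\partial M_i\) picks up the contribution along \(D\). That contribution equals the difference of Chern numbers computed through \(c_1(N_\cF)=c_1(N^{\log}_{\cF/X})+D\). I expect matching this boundary contribution along \(D\) with the cohomological difference to be the delicate step. Given Theorem~\ref{thm:residues} as stated, however, the corollary reduces to the subtraction above.
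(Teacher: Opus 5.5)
Your argument is the same as the paper's. You write $\Var_{c_1^{n-i},D^i}(\cF,D,p)=\int_{L_p}\psi_i-\int_{L_p}\psi_i^{\log}$, sum over $\Sigma_i$, and subtract the logarithmic identity of Theorem~\ref{thm:residues} from the ordinary one asserted in its proof; the theorem's statement in fact already contains the variational identity.

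Your side remark is a fair point about the underlying theorem, not about the corollary. Lemma~\ref{lem:differentials} gives $d\psi_i=d\psi_i^{\log}$, which does not sit well with $c_1(N_\cF)=c_1(N^{\log}_{\cF/X})+D$, so the discrepancy must come from the pole of $d\log h$ along $D$. This issue plays no role once the theorem is granted.
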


\begin{proof}
By Definition~\ref{def:var-residue} and by the definitions of the ordinary and
logarithmic residues, for each \(p\) one has
\[
\begin{aligned}
\Var_{c_1^{n-i},\,D^i}(\cF,D,p)
&=
\int_{L_p}\phi_i \\
&=
\int_{L_p}\psi_i-\int_{L_p}\psi_i^{\log} \\
&=
\operatorname{Res}_{c_1^{n-i},\,D^i}(\cF,D,p)
-
\widehat{\operatorname{Res}}^{\log}_{c_1^{n-i},\,D^i}(\cF,D,p).
\end{aligned}
\]
Summing over \(p\in\Sigma_i\) and applying
Theorem~\ref{thm:residues}\textup{(a)}--\textup{(b)} gives the formula. The case
\(i=0\) corresponds to \(\Sigma_0=\Sing(\cF)\).
\end{proof}

We now fix the notation used in the local Grothendieck-residue formulae. Let
\(p\in D_{\mathrm{reg}}\) be an isolated zero of the vector field induced on \(D\),
and let \(f\) be a reduced local equation of \(D\) at \(p\). Choose local coordinates
\((z_1,\ldots,z_{n-1},f)\), and write
\[
v=
\sum_{j=1}^{n-1}a_j\frac{\partial}{\partial z_j}
+
kf\frac{\partial}{\partial f},
\qquad k=\frac{v(f)}{f}.
\]
Since \(v\) is logarithmic along \(D\), the function \(k\) is holomorphic.
Set \(\bar a_j:=a_j|_D\). Let \(J_D(v)\) denote the Jacobian matrix of the
vector field induced by \(v\) on \(D\):
\[
J_D(v)=
\left(
\frac{\partial \bar a_u}{\partial z_v}
\right)_{1\leq u,v\leq n-1}.
\]
We assume below that \(p\) is an isolated zero of this induced vector field,
equivalently that
\[
(\bar a_1,\ldots,\bar a_{n-1})\subset \cO_{D,p}
\]
has zero-dimensional support.

\begin{proposition} 
\label{prop:hat-log-groth}
For \(1\leq i\leq n-1\), one has
\[
\widehat{\operatorname{Res}}^{\log}_{c_1^{n-i},\,D^i}(\cF,D,p)
=
\operatorname{Res}_{p}
\left[
\frac{
\bigl(\operatorname{tr}J_D(v)\bigr)^{n-i}
\left(\left. k\right|_D\right)^{i-1}
\,dz_1\wedge\cdots\wedge dz_{n-1}
}
{\bar a_1,\ldots,\bar a_{n-1}}
\right]_{D}.
\]
Equivalently,
\[
\widehat{\operatorname{Res}}^{\log}_{c_1^{n-i},\,D^i}(\cF,D,p)
=
\operatorname{Res}_{p}
\left[
\frac{
\bigl(\operatorname{tr}J_D(v)\bigr)^{n-i}
\left( k\right)^{i-1}
\,dz_1\wedge\cdots\wedge dz_{n-1}\wedge df
}
{a_1,\ldots,a_{n-1},f}
\right].
\]
\end{proposition}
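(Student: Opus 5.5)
We will compute the link integral defining \(\widehat{\operatorname{Res}}^{\log}_{c_1^{n-i},D^i}(\cF,D,p)=\int_{L_p}\psi_i^{\log}\) by making explicit choices of all auxiliary data. By Lemmas~\ref{lem:pfaff-frame}, \ref{lem:JD-local-potential} and \ref{lem:exact-difference}, the residue does not depend on these choices. The plan is to reduce the integral on the link \(L_p\subset X\) to a Grothendieck residue on \(D\), in the spirit of the relative residue theory of Brasselet--Seade--Suwa recalled in the preliminaries.

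First we pick Pfaff data adapted to the coordinates \((z_1,\ldots,z_{n-1},f)\). Since \(v=\sum a_j\partial_{z_j}+kf\partial_f\) is logarithmic, the logarithmic frame \(\partial_{z_1},\ldots,\partial_{z_{n-1}},f\partial_f\) of \(T_X(-\log D)\) is available near \(p\). In this frame \(v\) has coefficients \((a_1,\ldots,a_{n-1},k)\), so the logarithmic transverse structure is that of the vector field \(\tilde v=(a,k)\) with respect to the log-coframe \(dz_1,\ldots,dz_{n-1},df/f\). The logarithmic Bott connection on \(N^{\log}_{\cF/X}\) then has trace \(\widetilde\theta^{12}\) given, off the zeros of \(a\), by the standard Baum--Bott choice: \(\widetilde\theta^{12}\) is the "divergence" of \(v\) in the log frame, namely \(\sum_j\partial a_j/\partial z_j+(\text{log-derivative of }k\text{ along }f\partial_f)\). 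On a neighbourhood of \(D\) we choose the representative \(F_D\) of \(D\) and its primitive \(\beta\). Following the Chern--Weil identity \(c_1(N_\cF)=c_1(N^{\log})+D\), we take \(\beta\) to be the connection form \(\frac{1}{2\pi i}\)-normalised so that, in the Bott frame, its contraction along \(v\) equals \(k=v(f)/f\); this is the Bott-type connection on \(\mathcal O(D)\) induced by the action of \(v\) on \(f\).

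Second, we localise. Using the fibration argument of Proposition~\ref{prop:var-circle}(b), we deform \(L_p\) to a tube \(\{|f|=\varepsilon,\ \|\bar a\|\le\delta\}\cup\{|f|\le\varepsilon,\ \|\bar a\|=\delta\}\). As \(\varepsilon\to0\), the factors \(F_D^i=(d\beta)^i\) concentrate along \(D\). After integrating over the normal circle (giving one factor \(2\pi i\), as in Proposition~\ref{prop:var-circle}), one factor of \(F_D\) is absorbed, which is why the exponent of \(k\) drops to \(i-1\). The remaining \((2n-3)\)-form on \(D\cap B_p\) is the Bott form for the induced vector field \(\bar v=\sum\bar a_j\partial_{z_j}\) on \(D_{\mathrm{reg}}\), with characteristic polynomial \(c_1^{n-i}\cdot(\text{normal Chern form})^{i-1}\). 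Here the normal factor is evaluated through \(k|_D\) and \(c_1\) through \(\operatorname{tr}J_D(v)\), because \(\widetilde\theta^{12}|_D\) restricts to the divergence of \(\bar v\) plus \(k\)-terms that pair with \(F_D\).

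Third, we apply the classical Baum--Bott computation on the \((n-1)\)-dimensional germ \((D,p)\) with the isolated zero of \(\bar v\) (Grothendieck residue theorem, as recalled for \(\Res_\varphi(\cF,S)\)). This yields the first displayed formula. The second formula follows from the standard identity \(\Res_p[\,\cdot\,/\bar a]_D=\Res_p[\,\cdot\wedge df/(a,f)]\), already stated in the preliminaries for \(\Res_\varphi(\cF,\tau,p)\).

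The main obstacle is the second step. We must justify that the choice of \(\beta\) adapted to \(v\) gives \(k|_D\) as the normal Chern form, and we must check that the \(c_1\)-trace on \(D\) is exactly \(\operatorname{tr}J_D(v)\), with no stray \(k\) contribution. In particular, we must verify that the mixed terms in \(\widetilde\theta^{12}\wedge F_D^i\wedge F_{12}^{n-1-i}\) reorganise into the polynomial \((\operatorname{tr}J_D)^{n-i}k^{i-1}\). We would first try to handle this via the virtual-bundle formalism \(\varphi(H)\) with \(\tau=T_X|_D-N_D\), reading off \(\varphi=c_1^{n-i}c_1(N_D)^{i-1}\).
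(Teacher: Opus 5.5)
Your closing fallback is essentially the paper's whole proof. The paper does no link computation: it identifies the relative characteristic polynomial on $D$ as $(\operatorname{tr}J_D(v))^{n-i}(k|_D)^{i-1}$, i.e.\ $\varphi=c_1^{n-i}c_1(N_D)^{i-1}$ in the $\varphi(H)$ formalism for $\tau=T_X|_D-N_D$ (with $v$ acting trivially on $T_{\cF}$). It then quotes the relative residue formula of Brasselet--Seade--Suwa and the complete-intersection identification. Your bookkeeping for that identification is right. The logarithmic trace $\sum_j\partial a_j/\partial z_j+f\,\partial k/\partial f$ restricts to $\operatorname{tr}J_D(v)$, because $[v,f\partial_f]$ has coefficients in $(f)$ in the frame $(\partial_{z_1},\ldots,\partial_{z_{n-1}},f\partial_f)$. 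And $k|_D$ enters only through $N_D=\langle\partial/\partial f\rangle$, not through $f\partial_f$.

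The genuine gap is the mechanism of your second step, which is exactly the part you leave open.
\begin{itemize}
\item If $\beta$ is a smooth $v$-adapted primitive, as in your first step, then $F_D=d\beta$ is smooth across $D_{\mathrm{reg}}$ away from $p$. Nothing concentrates on $D$, and no normal loop produces a factor $2\pi i$. The circle argument of Proposition~\ref{prop:var-circle} works only because $\phi_i$ contains the gauge term $d\log h=\widetilde\theta^{12}-\theta^{12}$; it says nothing about $\psi_i^{\log}$ itself.
\item If instead you take the singular flat choice $\beta=\frac{1}{2\pi i}\,d\log f$ (which also satisfies $\beta(v)\propto k$), then $d\beta$ is, up to sign, the current $[D]$, and $(d\beta)^i$ is meaningless for $i\ge 2$. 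You would need one factor of $D$ localised by the canonical section of $\cO(D)$, the other $i-1$ factors represented by a smooth $v$-adapted connection, and a matching mixed transgression form.
\end{itemize}
Also, Lemmas~\ref{lem:pfaff-frame}, \ref{lem:JD-local-potential} and \ref{lem:exact-difference} only allow $\beta\mapsto\beta+d\rho$ for a fixed $F_D$. Replacing $F_D$ by a $v$-adapted curvature needs the Bott-theoretic fact that the residue is independent of the choice of $v$-compatible connections.

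A clean repair skips the tube entirely. Off $p$, your first-step choices are $v$-compatible connections on the virtual bundle $T_X(-\log D)-T_{\cF}$ and on $\cO(D)$. So Bott's residue theorem at the isolated zero $p$ of $v=(a_1,\ldots,a_{n-1},kf)$ gives, with the normalisation of the preliminaries,
\[
\widehat{\operatorname{Res}}^{\log}_{c_1^{n-i},\,D^i}(\cF,D,p)
=\operatorname{Res}_p\left[\frac{(\operatorname{tr}J^{\log})^{n-i}\,k^{i}\,dz_1\wedge\cdots\wedge dz_{n-1}\wedge df}{a_1,\ldots,a_{n-1},kf}\right],
\qquad
\operatorname{tr}J^{\log}=\sum_{j=1}^{n-1}\frac{\partial a_j}{\partial z_j}+f\frac{\partial k}{\partial f}.
\]
The transformation law with matrix $\operatorname{diag}(1,\ldots,1,k)$ cancels one factor $k$ against $kf$; this is where $i\ge 1$ is used. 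Since $f\,\partial k/\partial f\in(f)$, you may replace $\operatorname{tr}J^{\log}$ by $\operatorname{tr}J_D(v)$ modulo $(a_1,\ldots,a_{n-1},f)$. This gives the second displayed formula, and the first follows from the identification of relative and ambient residues. So the drop from $k^i$ to $k^{i-1}$ comes from the transformation law, not from integrating one factor of $F_D$ over a normal circle.
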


\begin{proof}
The local logarithmic normal directions split, in the adapted frame, into the
tangential directions along \(D\) and the logarithmic normal direction generated by
\(f\partial/\partial f\). The induced vector field on \(D\) is
\[
v_D=\sum_{j=1}^{n-1}\bar a_j\frac{\partial}{\partial z_j},
\]
and its tangential linear part is the matrix \(J_D(v)\). Moreover, since
\(v(f)=kf\), the action of \(v\) on the logarithmic normal generator
\(f\partial/\partial f\) has eigenvalue \(k|_D\). Thus, after restricting the
localization of
\[
c_1\bigl(N_{\cF/X}^{\log}\bigr)^{n-i}D^i
\]
to the hypersurface \(D\), one obtains the relative characteristic polynomial
\[
\bigl(\operatorname{tr}J_D(v)\bigr)^{n-i}(k|_D)^{i-1}
\]
of total degree \(n-1\) on \(D\). By the relative Grothendieck residue formula of
Brasselet--Seade--Suwa \cite[\S 6.3]{BrSeSu}, this gives
\[
\widehat{\operatorname{Res}}^{\log}_{c_1^{n-i},\,D^i}(\cF,D,p)
=
\operatorname{Res}_{p}
\left[
\frac{
\bigl(\operatorname{tr}J_D(v)\bigr)^{n-i}
(k|_D)^{i-1}
\,dz_1\wedge\cdots\wedge dz_{n-1}
}
{\bar a_1,\ldots,\bar a_{n-1}}
\right]_{D}.
\]
This is the first displayed expression, since \(k=v(f)/f\). The ambient expression
follows from the complete-intersection identification of relative residues
\cite[\S 6.3]{BrSeSu}:
\[
\operatorname{Res}_{p}
\left[
\frac{\omega}{\bar a_1,\ldots,\bar a_{n-1}}
\right]_{D}
=
\operatorname{Res}_{p}
\left[
\frac{\omega\wedge df}{a_1,\ldots,a_{n-1},f}
\right].
\]
\end{proof}

For \(0\leq i\leq n-1\), define
\[
\Delta_i(v,D)
:=
\left(\left.\frac{v(f)}{f}\right|_D\right)^{i-1}
\left[
\left(
\operatorname{tr}J_D(v)
+
\left.\frac{v(f)}{f}\right|_D
\right)^{n-i}
-
\bigl(\operatorname{tr}J_D(v)\bigr)^{n-i}
\right],
\]
with the convention that, for \(i=0\), this means the polynomial quotient
\[
\Delta_0(v,D)
:=
\frac{
\left(
\operatorname{tr}J_D(v)
+
\left.\dfrac{v(f)}{f}\right|_D
\right)^n
-
\bigl(\operatorname{tr}J_D(v)\bigr)^n
}
{ \left.(v(f)/f)\right|_D}.
\]

\begin{proposition} 
\label{prop:var-groth}
With the notation above, for every \(0\leq i\leq n-1\), one has
\[
\Var_{c_1^{n-i},\,D^i}(\cF,D,p)
=
\operatorname{Res}_{p}
\left[
\frac{
\Delta_i(v,D)\,
dz_1\wedge\cdots\wedge dz_{n-1}
}
{\bar a_1,\ldots,\bar a_{n-1}}
\right]_{D}.
\]
\end{proposition}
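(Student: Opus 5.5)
The plan is to compute \(\Var\) as the difference of the ordinary and logarithmic residues, evaluating each through a relative Grothendieck residue on \(D\). Corollary~\ref{cor:global-var} gives, pointwise, \(\Var_{c_1^{n-i},D^i}(\cF,D,p)=\operatorname{Res}_{c_1^{n-i},D^i}(\cF,D,p)-\widehat{\operatorname{Res}}^{\log}_{c_1^{n-i},D^i}(\cF,D,p)\), since \(\phi_i=\psi_i-\psi_i^{\log}\) on \(L_p\). For \(1\le i\le n-1\), Proposition~\ref{prop:hat-log-groth} already gives the logarithmic term as the relative residue of \((\operatorname{tr}J_D(v))^{n-i}(k|_D)^{i-1}\,dz_1\wedge\cdots\wedge dz_{n-1}\) over \((\bar a_1,\ldots,\bar a_{n-1})\). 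So the main task is to establish the analogous expression for the ordinary residue \(\operatorname{Res}_{c_1^{n-i},D^i}\). The answer should be the same formula with \(\operatorname{tr}J_D(v)\) replaced by \(\operatorname{tr}J_D(v)+k|_D\).

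To justify this replacement, I would work in the adapted coordinates \((z_1,\ldots,z_{n-1},f)\) and repeat the argument of Proposition~\ref{prop:hat-log-groth} with the ordinary frame \(\partial/\partial z_1,\ldots,\partial/\partial z_{n-1},\partial/\partial f\) in place of the logarithmic frame \(\partial/\partial z_j, f\partial/\partial f\). Along \(D\), the linear part of \(v\) acting on \(T_X|_D\) is block triangular. The tangential block is \(J_D(v)\), and the normal entry is \(\partial(kf)/\partial f|_D=k|_D\); in the logarithmic frame, by contrast, the normal generator \(f\partial_f\) contributes no extra term to the trace.

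Equivalently, on the Pfaff side the logarithmic gauge \eqref{eq:log-gauge} gives \(\theta^{12}=\widetilde\theta^{12}-d\log h\) with \(h=uf\). After the Bott-type localization on the fibre circles around \(D\), as in Proposition~\ref{prop:var-circle}(b), the term \(d\log f\) contributes exactly one extra factor \(k|_D\) to the trace. Thus \(\operatorname{Res}_{c_1^{n-i},D^i}\) equals the relative residue of \((\operatorname{tr}J_D(v)+k|_D)^{n-i}(k|_D)^{i-1}\,dz\) over \((\bar a_1,\ldots,\bar a_{n-1})\), by the same Brasselet--Seade--Suwa formula \cite[\S 6.3]{BrSeSu}. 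Subtracting the two expressions gives exactly \(\Delta_i(v,D)\) for \(i\ge1\).

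The case \(i=0\) needs separate treatment, because there is no factor \(D\) and the residue lives at \(p\) in \(X\). I would use Proposition~\ref{prop:var-circle}(b), which writes \(\Var_{c_1^n}\) as \((2\pi i)^{-n+1}\int_{K_p}\alpha_0\), an integral over the link of \(D\). The remaining step is to identify the integral of \(\alpha_0=\widetilde\theta^{12}\wedge F_{12}^{n-2}\) over \(K_p\) with a Grothendieck residue on \(D\). Expanding the difference of \(n\)-th powers telescopically, \(\bigl((t+k)^n-t^n\bigr)/k=\sum_{j} \binom{n}{j} t^{j}k^{n-1-j}\) with \(j\) running from \(0\) to \(n-1\). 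Each monomial of degree \(n-1\) should be matched with the part of the curvature along \(D\) in which one factor \(F_{12}\) has been traded for the residue \(2\pi i\) of \(d\log f\).

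I expect the main obstacle to be this \(i=0\) identification, in particular getting the combinatorial weights right. The naive \(\alpha_0\) contains only \(\operatorname{tr}\)-type forms, whereas \(\Delta_0\) contains mixed powers of \(k\). My first attempt would be to use the ordinary connection \(\theta^{12}=\widetilde\theta^{12}-d\log h\) together with Lemma~\ref{lem:pfaff-frame}, so that \(F_{12}\) can be written as a sum of curvature along \(D\) and a normal \(k\)-contribution near \(K_p\). I would then apply the relative residue formula term by term, and check the final normalisation against the case \(n=2\), where \(\Delta_0=2\operatorname{tr}J_D+k\) should reproduce the Camacho--Sad/GSV combination recorded in Section~\ref{sec:poincare}.
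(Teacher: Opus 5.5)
For $1\le i\le n-1$ your argument is the paper's. At each point $\Var=\operatorname{Res}-\widehat{\operatorname{Res}}^{\log}$. The ordinary linear part along $D$ has trace $t+k$, against $t$ in the logarithmic frame (here $t:=\operatorname{tr}J_D(v)$, $k:=k|_D$). The normal direction contributes $k^{i-1}$, and Brasselet--Seade--Suwa turns the difference into the relative residue of $\Delta_i$. Below, $\operatorname{Res}_D[g]$ denotes the relative residue of $g\,dz_1\wedge\cdots\wedge dz_{n-1}$ over $(\bar a_1,\ldots,\bar a_{n-1})$.

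\textbf{The gap is $i=0$, and it is not a matter of combinatorial weights: the route through Proposition~\ref{prop:var-circle}\textup{(b)} cannot produce $\Delta_0$.}
\begin{itemize}
\item In $d\log f\wedge\alpha_0$, the pole $d\log h$ of $\widetilde\theta^{12}=\theta^{12}+d\log h$ is killed up to exact terms.
\item So after fibre integration you are left with $\int_{K_p}\theta^{12}\wedge F_{12}^{\,n-2}$, a Bott-type integral of a single trace form on $D$.
\item It localizes to a residue whose numerator is a power of one trace, never the mixed sum $\Delta_0=\sum_{j=0}^{n-1}(t+k)^j t^{n-1-j}$.
\end{itemize}
Already for $n=2$, solving $d\eta=\theta\wedge\eta$ along $D$ gives $\frac{1}{2\pi i}\int_{K_p}\theta^{12}=\operatorname{Res}_D[t+k]$. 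The statement requires $\operatorname{Res}_D[2t+k]$. In the paper's $\mathbb P^2$ example at $[1:0:0]$ ($t=5$, $k=4$) this is $9/5$ against $14/5=56/20$. Summed over the two points of $D$, it gives $c_1(N_\cF)\cdot D=3$ instead of $c_1(N_\cF)^2-c_1(N^{\log}_{\cF/X})^2=5$. So the $n=2$ check you plan would fail, and no redistribution of $F_{12}$ fixes it. Proposition~\ref{prop:var-circle}\textup{(b)} is not compatible with the identity you want and cannot serve as the bridge.

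\textbf{The missing idea is that $i=0$ needs no separate treatment.} The paper runs the same difference argument with normal factor $k^{i-1}=k^{-1}$. Note that Proposition~\ref{prop:hat-log-groth} covers only $i\ge 1$, so this is what supplies the $i=0$ terms. If $k(p)\neq0$, the ideals $(a_1,\ldots,a_{n-1},kf)$ and $(a_1,\ldots,a_{n-1},f)$ coincide at $p$. The transformation law, together with $\operatorname{tr}Jv|_D=t+k$, then gives
\[
\operatorname{Res}_p\left[\frac{(\operatorname{tr}Jv)^n\,dz_1\wedge\cdots\wedge dz_{n-1}\wedge df}{a_1,\ldots,a_{n-1},kf}\right]
=\operatorname{Res}_p\left[\frac{(\operatorname{tr}Jv)^n\,k^{-1}\,dz_1\wedge\cdots\wedge dz_{n-1}\wedge df}{a_1,\ldots,a_{n-1},f}\right]
=\operatorname{Res}_D\bigl[(t+k)^n k^{-1}\bigr].
\]
The logarithmic contribution is the same with $t^n$ in place of $(t+k)^n$. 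The individual terms need $k(p)\neq 0$, but their difference has the polynomial numerator $\bigl((t+k)^n-t^n\bigr)/k=\Delta_0$. This is why $\Delta_0$ is defined as a polynomial quotient.

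Equivalently, write
\[
c_1(N_\cF)^n-c_1(N^{\log}_{\cF/X})^n=D\cdot\sum_{j=0}^{n-1}c_1(N_\cF)^j\,c_1(N^{\log}_{\cF/X})^{n-1-j},
\]
and localize on $D$ with traces $t+k$ and $t$.
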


\begin{proof}
By the preceding corollary, locally the variational residue is the difference
between the ordinary and logarithmic contributions:
\[
\Var_{c_1^{n-i},\,D^i}(\cF,D,p)
=
\operatorname{Res}_{c_1^{n-i},\,D^i}(\cF,D,p)
-
\widehat{\operatorname{Res}}^{\log}_{c_1^{n-i},\,D^i}(\cF,D,p).
\]
In the adapted coordinates, the ordinary linearisation has trace
\[
\operatorname{tr}J_D(v)+ \left.(v(f)/f)\right|_D,
\]
whereas the logarithmic tangential linearisation has trace
$
\operatorname{tr}J_D(v).$
After passing to the relative residue on \(D\), the normal logarithmic factor
contributes the powers of \(\left.(v(f)/f)\right|_D\) appearing in
\(\Delta_i(v,D)\). Hence the numerator of the difference is precisely
\(\Delta_i(v,D)\). The relative Grothendieck-residue formula of
Brasselet--Seade--Suwa \cite[\S 6.3]{BrSeSu} gives the stated expression.
\end{proof}

\begin{example}
Let \(\cF\) be the one-dimensional holomorphic foliation on \(\mathbb P^2\),
logarithmic along \(D=\{z_2=0\}\), induced by
\[
v
=
-3z_0\frac{\partial}{\partial z_0}
+
2z_1\frac{\partial}{\partial z_1}
+
z_2\frac{\partial}{\partial z_2}.
\]
On \(U_0=\{z_0\neq 0\}\), with coordinates \((x,y)\), this vector field becomes
\[
v_{|U_0}
=
5x\frac{\partial}{\partial x}
+
4y\frac{\partial}{\partial y},
\]
and has a unique singularity at the origin. Since \(c_1(Jv)=9\) and
\(c_1(Jv^{\log})=5\), Proposition~\ref{prop:var-groth} gives
\[
\Var_{c_1^{2-i},\,D^i}(\cF,D,p)
=
\begin{cases}
\dfrac{56}{20}, & i=0,\\[6pt]
\dfrac{16}{20}, & i=1.
\end{cases}
\]
On \(U_1=\{z_1\neq 0\}\), with coordinates \((u,t)\), the vector field becomes
\[
v_{|U_1}
=
-5u\frac{\partial}{\partial u}
-
t\frac{\partial}{\partial t},
\]
again with a unique singularity at the origin. Here \(c_1(Jv)=-6\) and
\(c_1(Jv^{\log})=-5\), and therefore
\[
\Var_{c_1^{2-i},\,D^i}(\cF,D,p)
=
\begin{cases}
\dfrac{11}{5}, & i=0,\\[6pt]
\dfrac{1}{5}, & i=1.
\end{cases}
\]
Summing the two local contributions gives
\[
\frac{56}{20}+\frac{11}{5}=5
=
c_1(N_{\cF})^2-c_1(N_{\cF}^{\log})^2
\]
for \(i=0\), and
\[
\frac{16}{20}+\frac{1}{5}=1
=
\bigl(c_1(N_{\cF})-c_1(N_{\cF}^{\log})\bigr)D
\]
for \(i=1\). This verifies Corollary~\ref{cor:global-var}, since
\(c_1(N_{\cF})=3\), \(\deg\cF=1\), \(\deg D=1\), and
\(c_1(N_{\cF}^{\log})=2\).
\end{example}

\begin{example}
Let \(X=\mathbb P^3\), with homogeneous coordinates
\([z_0:z_1:z_2:z_3]\), and consider the vector field
\[
v
=
-4z_0\frac{\partial}{\partial z_0}
+
3z_1\frac{\partial}{\partial z_1}
+
2z_2\frac{\partial}{\partial z_2}
-
z_3\frac{\partial}{\partial z_3},
\]
together with the divisor \(D=\{z_3=0\}\). Let \(\cF\) be the induced
one-dimensional foliation on \(\mathbb P^3\), which is logarithmic along \(D\).
On \(U_0\), the vector field is
\[
v_{|U_0}
=
7x\frac{\partial}{\partial x}
+
6y\frac{\partial}{\partial y}
+
3z\frac{\partial}{\partial z},
\]
so \(c_1(Jv)=16\) and \(c_1(Jv^{\log})=13\). On \(U_1\), with coordinates
\((u,q,w)\), it is
\[
v_{|U_1}
=
-7u\frac{\partial}{\partial u}
-
q\frac{\partial}{\partial q}
-
4w\frac{\partial}{\partial w},
\]
so \(c_1(Jv)=-12\) and \(c_1(Jv^{\log})=-8\). Finally, on \(U_2\), with
coordinates \((r,s,t)\), it is
\[
v_{|U_2}
=
-6r\frac{\partial}{\partial r}
+
s\frac{\partial}{\partial s}
-
3t\frac{\partial}{\partial t},
\]
so \(c_1(Jv)=-8\) and \(c_1(Jv^{\log})=-5\).
Using Proposition~\ref{prop:var-groth} and Theorem~\ref{thm:residues}, we obtain:
\begin{itemize}
\item For \(i=0\),
\[
\begin{aligned}
\sum_{p\in \Sing(\cF)\cap D}\Var_{c_1^3}(\cF,D,p)
&=
\int_{\mathbb P^3}
\Bigl[
c_1(N_{\cF})^3
-
c_1\bigl(N^{\log}_{\cF/X}\bigr)^3
\Bigr],\\
\frac{1899}{126}+\frac{1216}{28}-\frac{387}{18}
&=
4^3-3^3=37.
\end{aligned}
\]

\item For \(i=1\),
\[
\begin{aligned}
\sum_{p\in \Sing(\cF)\cap D}
\Var_{c_1^2,\,D}(\cF,D,p)
&=
\int_{\mathbb P^3}
\Bigl[
c_1(N_{\cF})^2
-
c_1\bigl(N^{\log}_{\cF/X}\bigr)^2
\Bigr]D,\\
\frac{29}{14}+\frac{80}{7}-\frac{13}{2}
&=
4^2-3^2=7.
\end{aligned}
\]

\item For \(i=2\),
\[
\begin{aligned}
\sum_{p\in \Sing(\cF)\cap D}
\Var_{c_1,\,D^2}(\cF,D,p)
&=
\int_{\mathbb P^3}
\Bigl[
c_1(N_{\cF})
-
c_1\bigl(N^{\log}_{\cF/X}\bigr)
\Bigr]D^2,\\
\frac{9}{42}+\frac{16}{7}-\frac{9}{6}
&=
4-3=1.
\end{aligned}
\]
\end{itemize}
\end{example}

\section{Exceptional residues and log discrepancies}
\label{sec:exceptional-residues}

We finish with a birational application of the logarithmic residue formula in
dimension two. The point is that the logarithmic dynamics created on a
resolution detects the discrepancies of the ambient singularity. Under the
hypotheses below, the residue vector along the exceptional components is
transformed by the inverse of the exceptional intersection matrix into the log
discrepancy vector.

Let \(Y\) be a normal compact \(\mathbb Q\)-Gorenstein complex surface with
isolated singularities, and let \(\cF\) be a one-dimensional foliation on \(Y\)
such that
\[
\Sing(\cF)\subset \Sing(Y).
\]
Thus \(\cF\) is regular on \(Y_{\mathrm{reg}}\). Let
$
\pi:(X,D)\longrightarrow (Y,\emptyset)
$
be a functorial log resolution, with
$$
D=\operatorname{Exc}(\pi)_{\mathrm{red}}
=
D_1+\cdots+D_r
$$
a simple normal crossings divisor. By functoriality of the resolution and the
invariance of \(\Sing(Y)\) under local automorphisms, vector fields defining
\(\cF\) lift to logarithmic vector fields tangent to \(D\); see
\cite[Theorems~3.34, 3.35]{kollar07} for functorial log resolutions and
\cite[Corollary~4.7]{GKK10} for the logarithmic lifting of vector fields to such
resolutions.

Assume that \(\cF\) is induced on \(Y_{\mathrm{reg}}\) by a global non-twisted
vector field. Let \(\widetilde{\cF}\) be the saturation in \(T_X(-\log D)\) of
the rank-one subsheaf generated by the lifted logarithmic vector field. We
assume that \(\widetilde{\cF}\) has no divisorial zeroes. For each component \(D_j\), let \(Z_{D_j}(\widetilde{\cF})\) denote the finite
set of logarithmic zeroes of \(\widetilde{\cF}\) which contribute to the
localization of the class
\[
c_1\bigl(N^{\log}_{\widetilde{\cF}/X}\bigr)[D_j].
\]
Define the componentwise exceptional residue
\[
I_j:=
\sum_{p\in Z_{D_j}(\widetilde{\cF})}
\widehat{\operatorname{Res}}^{\log}_{c_1,D_j}
(\widetilde{\cF},D,p).
\]
Here
\(\widehat{\operatorname{Res}}^{\log}_{c_1,D_j}\) denotes the logarithmic residue
obtained from Theorem~\ref{thm:residues} by inserting the divisor class
\([D_j]\), while the logarithmic normal sheaf is still defined with respect to
the total boundary \(D\).

Now, let
\[
I=(I_1,\ldots,I_r)^t
\]
be the residue vector, and let
\[
M=(D_j\cdot D_k)_{j,k}
\]
be the exceptional intersection matrix. Since \(Y\) is \(\mathbb Q\)-Gorenstein,
we may write, in \(N^1(X)_{\mathbb Q}\),
\[
K_X=\pi^*K_Y+\sum_{j=1}^r a_jD_j.
\]
Set
\[
b=(b_1,\ldots,b_r)^t,
\qquad
b_j:=a_j+1.
\]

\begin{theorem} 
\label{thm:exceptional-residues-discrepancies}
With the notation and assumptions above, one has
$
I=-Mb.
$
Consequently,
$
b=-M^{-1}I.
$
In particular,
\[
Y\text{ has log canonical singularities}
\quad\Longleftrightarrow\quad
(-M^{-1}I)_j\ge0, 
\quad\text{for every }j.
\]
\end{theorem}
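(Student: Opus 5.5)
Apply the global residue formula of Theorem~\ref{thm:residues} with \(n=2\), \(i=1\), inserting the class \([D_j]\) in place of \(D\), componentwise. Since only the divisor class is changed and the logarithmic normal sheaf is still taken with respect to the total boundary \(D\), the same Stokes argument gives
\[
I_j=\int_X c_1\bigl(N^{\log}_{\widetilde{\cF}/X}\bigr)\cdot D_j .
\]
The theorem then reduces to computing the line bundle \(\det N^{\log}_{\widetilde{\cF}/X}\) and intersecting it with each \(D_j\).

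For that computation we use the logarithmic tangent sequence
\[
0\to T_{\widetilde{\cF}}\to T_X(-\log D)\to N^{\log}_{\widetilde{\cF}/X}\to 0 .
\]
On a surface \(N^{\log}_{\widetilde{\cF}/X}\) has rank one, so \(c_1(N^{\log}_{\widetilde{\cF}/X})=c_1(T_X(-\log D))-c_1(T_{\widetilde{\cF}})\). Because \(D\) is simple normal crossings, \(\det T_X(-\log D)=\cO_X(-K_X-D)\).

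Next we determine \(T_{\widetilde{\cF}}\). The foliation \(\cF\) is given by a global non-twisted vector field on \(Y_{\mathrm{reg}}\), so \(T_{\cF}\cong\cO\) there. The lifted logarithmic field is a global section of \(T_X(-\log D)\). Since \(\widetilde{\cF}\) is its saturation and has no divisorial zeroes, the section vanishes in codimension \(\geq 2\), and hence \(T_{\widetilde{\cF}}\cong\cO_X\). Therefore
\[
c_1\bigl(N^{\log}_{\widetilde{\cF}/X}\bigr)=-(K_X+D)
=-\pi^*K_Y-\sum_k (a_k+1)D_k .
\]
Intersecting with \(D_j\) and using the projection formula, \(\pi^*K_Y\cdot D_j=0\) because \(D_j\) is \(\pi\)-exceptional (this uses \(\mathbb Q\)-Gorenstein to make \(\pi^*K_Y\) meaningful). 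This gives \(I_j=-\sum_k b_k\,D_k\cdot D_j\), that is, \(I=-Mb\).

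By Mumford/Grauert, the exceptional intersection matrix \(M\) is negative definite, hence invertible, so \(b=-M^{-1}I\). Log canonicity of \(Y\) means \(a_j\ge -1\) for all exceptional components of a log resolution, i.e. \(b_j\ge 0\). This yields the stated criterion.

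The main obstacle is justifying the componentwise use of Theorem~\ref{thm:residues}. One must check that inserting \([D_j]\) rather than \([D]\) still localizes on the finite sets \(Z_{D_j}(\widetilde{\cF})\). This includes the corner points \(D_j\cap D_k\), where the induced field on \(D_j\) has zeroes and the divisor is not smooth. First I would rederive the Stokes argument with a closed representative \(F_{D_j}\) supported near \(D_j\): Bott vanishing of the logarithmic Bott connection along \(D_j\) away from zeroes of \(v|_{D_j}\) forces the localization, which is what the definition of \(Z_{D_j}\) encodes. A secondary point is confirming \(T_{\widetilde{\cF}}\cong\cO_X\), which follows from the no-divisorial-zeroes hypothesis.
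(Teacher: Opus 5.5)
Your proposal is correct and follows the paper's proof essentially step for step. Both arguments run as follows: the no-divisorial-zeroes hypothesis gives $T_{\widetilde{\cF}}\cong\mathcal O_X$, then $\det T_X(-\log D)\simeq\mathcal O_X(-(K_X+D))$ yields $c_1(N^{\log}_{\widetilde{\cF}/X})=-(K_X+D)$; next the componentwise residue identity $I_j=c_1(N^{\log}_{\widetilde{\cF}/X})\cdot D_j$ together with $\pi^*K_Y\cdot D_j=0$ and negative definiteness of $M$ finishes the proof. Your caveat about the corner points $D_j\cap D_k$ is a step the paper does not justify but simply absorbs into its definition of $Z_{D_j}(\widetilde{\cF})$: there the smooth-point Grothendieck formula does not apply, and the localization must be computed with the logarithmic frame.
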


\begin{proof}
Since the lifted foliation is generated by a global non-twisted logarithmic
vector field and has no divisorial zeroes, its saturated tangent sheaf is
$
T_{\widetilde{\cF}}\simeq \mathcal O_X.
$
Hence the logarithmic tangent sequence
\[
0\longrightarrow T_{\widetilde{\cF}}
\longrightarrow T_X(-\log D)
\longrightarrow N^{\log}_{\widetilde{\cF}/X}
\longrightarrow 0
\]
gives $
\det N^{\log}_{\widetilde{\cF}/X}
\simeq
\det T_X(-\log D).
$
Since \(D\) is a simple normal crossings divisor,
\[
\det T_X(-\log D)
\simeq
\mathcal O_X(-(K_X+D)).
\]
Therefore
$
c_1\bigl(N^{\log}_{\widetilde{\cF}/X}\bigr)=-(K_X+D).
$
By the componentwise form of Theorem~\ref{thm:residues}, obtained by inserting
the divisor class \([D_j]\), we have
\[
I_j
=
\int_X c_1\bigl(N^{\log}_{\widetilde{\cF}/X}\bigr)D_j.
\]
Thus
$
I_j=-(K_X+D)\cdot D_j.
$
Since
$
K_X=\pi^*K_Y+\sum_{k=1}^r a_kD_k $
and \(D=\sum_kD_k\), we have
\[
K_X+D
=
\pi^*K_Y+\sum_{k=1}^r(a_k+1)D_k
=
\pi^*K_Y+\sum_{k=1}^r b_kD_k.
\]
Intersecting with \(D_j\), and using
$
\pi^*K_Y\cdot D_j=0
$
because \(D_j\) is exceptional, gives
\[
(K_X+D)\cdot D_j
=
\sum_{k=1}^r b_k(D_k\cdot D_j).
\]
Therefore
\[
I_j
=
-\sum_{k=1}^r(D_k\cdot D_j)b_k.
\]
In matrix notation this is
$
I=-Mb. $
The exceptional intersection matrix of a resolution of a normal surface
singularity is negative definite, hence invertible. Therefore
\[
b=-M^{-1}I.
\]
Finally, \(Y\) is log canonical if and only if
$
a_j\ge -1
\quad\text{for every }j, $
equivalently if and only if
$
b_j=a_j+1\ge0
\quad\text{for every }j.
$
This proves the last assertion.
\end{proof}

This  gives a dynamical interpretation of the log discrepancies of
surface singularities. The lifted logarithmic dynamics along the exceptional
divisor produces the residue vector \(I\), and the intersection matrix of the
resolution transforms this vector into the log discrepancy vector
\[
(a_1+1,\ldots,a_r+1)^t.
\]
Thus, under the stated isolatedness assumptions, exceptional logarithmic
residues provide a dynamical test for log canonicity.

\begin{example}[The cyclic quotient singularity \(\frac{1}{m}(1,1)\)]
Let \(m\geq 2\), and let
$
Y_m:=\mathbb C^2/\mu_m,
$
where \(\mu_m\) acts by
$
\zeta\cdot(u,v)=(\zeta u,\zeta v).
$
Then \(Y_m\) has a cyclic quotient surface singularity of type
\(\frac{1}{m}(1,1)\) at the image of the origin.
Consider on \(\mathbb C^2\) the vector field
\[
w=u\frac{\partial}{\partial u}
-
v\frac{\partial}{\partial v}.
\]
It commutes with the scalar \(\mu_m\)-action, and therefore descends to a
holomorphic vector field \(v\) on \(Y_m\). Since \(w\) vanishes only at the
origin of \(\mathbb C^2\), the descended vector field \(v\) vanishes only at the
singular point of \(Y_m\). Thus the induced foliation \(\mathcal F\) is regular
on \((Y_m)_{\mathrm{reg}}\), and
$
\Sing(\mathcal F)\subset \Sing(Y_m).
$
The minimal resolution of \(Y_m\) is
$
\pi:X_m\to Y_m,
$
where \(X_m\) is the total space of the line bundle
\(\mathcal O_{\mathbb P^1}(-m)\). The exceptional divisor is the zero section
\[
D=E\simeq\mathbb P^1,
\qquad
E^2=-m.
\]
On the affine chart with coordinate
$
b=v/u
$
on \(\mathbb P^1\), set
$
x=u^m.
$
Then the quotient invariants are locally generated by
$
u^{m-k}v^k=x b^k,\ 
0\leq k\leq m,
$
and the exceptional curve is \(E=\{x=0\}\). In these coordinates, the lifted
vector field is
\[
\widetilde v
=
m x\frac{\partial}{\partial x}
-
2b\frac{\partial}{\partial b}.
\]
Hence \(\widetilde v\) is logarithmic along \(E\), since it is generated by
\(x\partial/\partial x\) in the normal logarithmic direction. Its restriction to
\(E\) is
\[
\widetilde v|_E
=
-2b\frac{\partial}{\partial b},
\]
which has one zero at \(b=0\).
On the second affine chart, with
$
c= (u/v)
$,$
y=v^m,
$
one obtains
\[
\widetilde v
=
-m y\frac{\partial}{\partial y}
+
2c\frac{\partial}{\partial c}.
\]
Thus the restriction to \(E\) has the second zero at \(c=0\), corresponding to
\(b=\infty\). Therefore the lifted logarithmic foliation has no divisorial
zeroes and has two isolated zeroes on the exceptional curve.

Let
\[
I_E:=
\sum_{p\in Z_E(\widetilde{\mathcal F})}
\widehat{\operatorname{Res}}^{\log}_{c_1,E}
(\widetilde{\mathcal F},E,p).
\]
We compute \(I_E\) directly. On the chart with coordinate \(b\), the induced
field on \(E\) is
$
-2b \partial/\partial b.
$
Thus the local residue at \(b=0\) is
\[
\operatorname{Res}_{b=0}
\left[
\frac{-2\,db}{-2b}
\right]
=
\operatorname{Res}_{b=0}\left[\frac{db}{b}\right]
=
1.
\]
On the chart with coordinate \(c\), the induced field is
$
2c\frac{\partial/\partial c},
$
and the local residue at \(c=0\) is
\[
\operatorname{Res}_{c=0}
\left[
\frac{2\,dc}{2c}
\right]
=
\operatorname{Res}_{c=0}\left[\frac{dc}{c}\right]
=
1.
\]
Therefore
$
I_E=2.$
The exceptional intersection matrix is
$
M=(E^2)=(-m).
$
Write
$
K_{X_m}=\pi^*K_{Y_m}+aE.
$
By adjunction,
$
(K_{X_m}+E)\cdot E=2g(E)-2=-2,
$
and since \(E^2=-m\), this gives
$
K_{X_m}\cdot E=-2-E^2=m-2.
$
On the other hand,
$$
K_{X_m}\cdot E
=
aE^2
=
-am.
$$
Hence
$
a=2/m-1.
$
Thus
$
b=a+1=2/m.
$
The identity of Theorem~\ref{thm:exceptional-residues-discrepancies} becomes
$
I=-Mb.
$
Here this reads
$
2=-(-m)(2/m),
$
which is true. Equivalently,
$
b=-M^{-1}I
=
-(-m)^{-1}\cdot 2
=
2/m.
$
Therefore the residue vector recovers
\[
a+1=\frac{2}{m},
\qquad
a=\frac{2}{m}-1.
\]
In particular,
\[
-M^{-1}I=\frac{2}{m}>0,
\]
so the criterion detects that \(Y_m\) is log terminal, hence log canonical. For
\(m=2\), one obtains \(a=0\), the canonical \(A_1\)-singularity. For \(m>2\),
one obtains
$
-1<a<0,$
so the singularity is log terminal but not canonical.
\end{example}

\subsection*{Acknowledgments}
MC is partially supported by the Universit\`a degli Studi di Bari and is a member of INdAM--GNSAGA.

\end{document}